\newtheorem{theorem}{Theorem}[section]
\newtheorem{corollary}[theorem]{Corollary}
\newtheorem{lemma}[theorem]{Lemma}
\newtheorem{proposition}[theorem]{Proposition}
\newtheorem{assumption}{Assumption}
\theoremstyle{definition}
\theoremstyle{remark}
\newtheorem{example}[theorem]{Example}
\numberwithin{equation}{section}
\begin{document}
	
	\def\Pro{{\mathbb{P}}}
	\def\E{{\mathbb{E}}}
	\def\e{{\varepsilon}}
	\def\veps{{\varepsilon}}
	\def\ds{{\displaystyle}}
	\def\nat{{\mathbb{N}}}
	\def\Dom{{\textnormal{Dom}}}
	\def\dist{{\textnormal{dist}}}
	\def\R{{\mathbb{R}}}
	\def\O{{\mathcal{O}}}
	\def\T{{\mathcal{T}}}
	\def\Tr{{\textnormal{Tr}}}
	\def\sgn{{\textnormal{sign}}}
	\def\I{{\mathcal{I}}}
	\def\A{{\mathcal{A}}}
	\def\H{{\mathcal{H}}}
	\def\S{{\mathcal{S}}}
	
	\title{Global solutions to the stochastic heat equation with superlinear accretive reaction term and polynomially growing multiplicative noise.}%
	\author{M. Salins\\ Boston University \\ msalins@bu.edu }
	\maketitle
	
	\begin{abstract}
		We prove that mild solutions to the stochastic heat equation with superlinear accretive forcing and polynomially growing multiplicative noise cannot explode under two sets of assumptions. The first set of assumptions allows both the deterministic forcing and multiplicative noise terms to grow polynomially, as long as the multiplicative noise is sufficiently larger. The second set of assumptions imposes an Osgood condition on the deterministic forcing and allows the multiplicative noise to grow polynomially. In both cases, the multiplicative noise cannot grow faster than $u^{\frac{3}{2}}$, as this would cause explosion. 
		
	\end{abstract}
\section{Introduction}
We prove that the nonlinear stochastic heat equation
\begin{equation} \label{eq:SPDE}
	\frac{\partial u}{\partial t}(t,x) = \frac{\partial^2 u}{\partial x^2}(t,x) + b(u(t,x)) + \sigma(u(t,x))\dot{W}(t,x)
\end{equation}
has a global solution under certain assumptions when $b$ and $\sigma$ both grow superlinearly.
The spatial domain is $D:=[-\pi,\pi]\subset \mathbb{R}$ with periodic boundary conditions imposed. $\dot{W}(t,x)$ is a space-time white noise. We are primarily interested in two examples for the forcing term $b$: the polynomial case where $b(u) \approx u^\beta$ for $\beta \in (1,2]$, which with additive noise would cause explosion, and the Osgood case where $b(u) \approx u\log(u)$, which with additive noise would never explode \cite{bg-2009,fn-2021}. In both cases, we will show that if the multiplicative noise term $\sigma(u)$ satisfies appropriate lower and upper bounds, then the solutions to \eqref{eq:SPDE} can never explode. In the polynomial case, this proves that superlinear multiplicative stochastic noise can prevent explosion in SPDEs.

Mueller first investigated finite time explosion for equation \eqref{eq:SPDE} in the case where $b\equiv 0$ \cite{mueller-1991,mueller-1998,mueller-2000,ms-1993}. Mueller proved that when $b\equiv 0$, and $\sigma(u) \leq C(1 + |u|^\gamma)$, for $\gamma <\frac{3}{2}$, the solutions cannot explode. Furthermore, when $\sigma(u) =c |u|^\gamma$ for $\gamma>\frac{3}{2}$, solutions can explode with positive probability. Recently, it was shown that in the critical case when $\gamma=\frac{3}{2}$, solutions to the stochastic heat equation cannot explode \cite{s-2024-aop}. Similar results have been investigated for other spatial domains, noises, and SPDEs \cite{krylov,bezdek-2018,mueller-1997,bd-2002}

Foondun and Nualart, extending a result by Bonder and Groisman \cite{fn-2021,bg-2009}, proved that when $\sigma$ is bounded away from $0$ and $\infty$ in the sense that $0<c \leq \sigma(u) \leq C < +\infty$, and $b$ is positive and increasing, the so-called Osgood condition on $b$ fully characterizes explosion. Solutions explode with probability one if 
\begin{equation} \label{eq:Osgood-explode}
	\int_1^\infty \frac{1}{b(u)}du<\infty,
\end{equation}
and solutions exist for all time with probability one if 
\begin{equation} \label{eq:Osgood-no-explode}
	\int_1^\infty \frac{1}{b(u)}du = \infty. 
\end{equation}
This is the same condition that Osgood used to characterize explosion of the one-dimensional ordinary differential equation $\frac{dv}{dt} = b(v(t))$ \cite{osgood}. On unbounded domains, the explosive Osgood condition \eqref{eq:Osgood-explode}  guarantees instantaneous everywhere explosion \cite{fkn-2024}.

While explosion is fully characterized in the special cases where $b \equiv 0$ or $\sigma $ is bounded away from $0$ and $\infty$, the interaction of both superlinear $b$ and superlinear $\sigma$ is not fully understood. A first result in this direction is due to Dalang, Khoshnevisan, and Zhang \cite{dkz-2019}, who proved that if $b(u) \leq C( 1+ |u|\log|u|)$ and $\sigma(u) \in o(|u|(\log(u))^{\frac{1}{4}})$. Then solutions to \eqref{eq:SPDE} are global in time with probability one. This result has been extended slightly to other cases, including a more general Osgood condition on $b$, more general domains and colored noises, and to other settings like the stochastic  wave equation and stochastic heat equation on an unbounded spatial domain \cite{ms-wave-2021,ch-2023,salins-2022,sz-2022,lz-2022,av-2023}. All of the previously mentioned results about superlinear $\sigma$, however, consider $\sigma$ that grow like $|u|(\log|u|)^\gamma$, which is much slower than the allowable $|u|^{\frac{3}{2}}$ growth rate identified by Mueller in the $b\equiv 0$ case.

%
%

The results of this paper prove that Mueller's $\sigma(u)\approx |u|^{\frac{3}{2}}$ growth rate is the maximal growth rate on $\sigma$ to guarantee global solutions, even when $b$ is accretive and superlinear. We will prove that if $b$ satisfies the non-explosive Osgood condition \eqref{eq:Osgood-no-explode} and $\sigma(u) \leq C(1 + |u|^\frac{3}{2})$, then the mild solutions to \eqref{eq:SPDE} are global in time with probability one. This proves that the $\sigma \in o(u(\log(u))^{\frac{1}{4}})$ restriction from \cite{dkz-2019} is not a necessary condition for non-explosion.  
Additionally, we can prove that superlinear $\sigma$ can actually prevent explosion, even in some cases where $b$ satisfies the explosive Osgood condition \eqref{eq:Osgood-explode}, including the polynomial growth example $b(u)= u^\beta$ for $\beta \in (1,2]$. In the case where $b(u) = u^\beta$, we require $\sigma$ to grow faster than $ |u|^{\frac{\beta + 1}{2}}$, and to grow slower than $|u|^{\frac{3}{2}}$ .  We consider the following two sets of assumptions.

\begin{assumption} \label{assum}
	Assume that $b: \mathbb{R} \to \mathbb{R}$ and $\sigma:\mathbb{R} \to \mathbb{R}$ are locally Lipschitz continuous. Additionally, we assume either
	\begin{enumerate}
		\item[(a)] There exist constants $\theta>0$ and $C>0$, and a positive, increasing, convex $h: [0,+\infty) \to [0,+\infty)$ such that
		\begin{equation} \label{eq:b-upper-case-a}
			|b(u)| \leq \theta(1 + |u|) + h(|u|), \text{ for all } u \in \mathbb{R},
		\end{equation}
		\begin{equation} \label{eq:sigma-upper-lower-case-a}
			\left(\frac{1}{2\pi} + |u|\right)h(|u|) \leq \frac{1}{4\pi} \sigma^2(u) \leq C( 1+ |u|^3), \text{ for all } u \in \mathbb{R},
		\end{equation}
		and
		\begin{equation}
			\label{eq:convexity-case-a}
			\frac{h(u)h''(u)}{(h'(u))^2} \leq 2, \text{ for all } u>0.
		\end{equation}
		\item[(b)] Or there exists positive constants $c>0$, $C>0$, and $\gamma \in (1/2,1)$, and a positive, increasing, convex function $h:[0,+\infty) \to [0,+\infty)$ such that 
		\begin{equation} \label{eq:b-upper-case-b}
			|b(u)| \leq h(|u|) \text{ for all } u \in \mathbb{R},
		\end{equation}
		\begin{equation} \label{eq:h-osgood-case-b}
			\int_1^\infty \frac{1}{h(u)}du=+\infty,
		\end{equation} 
		\begin{equation} \label{eq:h-growth-case-b}
			\limsup_{u \to +\infty} \frac{h(u^2)}{u^{2\gamma+1}}=0,
		\end{equation}
		and 
		\begin{equation} \label{eq:sigma-upper-lower-case-b}
			c|u|^{2\gamma} \leq \sigma^2(u) \leq C(1 + |u|^3) \text{ for all } u \in \mathbb{R}.
		\end{equation}
	\end{enumerate}
\end{assumption}

Notice that condition (b) imposes a non-explosive Osgood condition on $b$, while (a) requires the growth rate of $b$ to be dominated by $\sigma$ in an appropriate way. The convexity assumptions on $h$, including \eqref{eq:convexity-case-a}, may look technical, but they are satisfied in most natural examples. The assumption \eqref{eq:convexity-case-a} guarantees that the mapping $u \mapsto uh^{-1}(u)$ is convex, and enables the proof of Lemma \ref{lem:Jensen}, below. Imposing appropriate convexity assumptions on $h$, while requiring that $h(u)$ dominates $b(u)$ and $(2\pi +u)h(u)$ is dominated by $\sigma^2(u)$, enables us to use Jensen inequality arguments without imposing an unnatural convexity assumption on either $b$ or $\sigma$. The examples we have in mind are $h(u) = Au^\beta$ or $h(u) = A(1 + u) (\log(1+u))^\beta$ for some $A>0$ and $\beta >0$. Both of these examples satisfy the convexity assumption \eqref{eq:convexity-case-a}. Similarly, \eqref{eq:h-growth-case-b} for case (b) is a very natural restriction that is only included to remove pathological counterexamples. We are mostly interested in $h$ of the form $h(u) = (1+u)\log(1+u)$ or $h(u) = (e+u) \log(e+ u)\log\log(e+u)$, which satisfy \eqref{eq:h-growth-case-b}.

The main result of this paper is the following.
\begin{theorem}\label{thm:main}
	Assume Assumption \ref{assum}.
	Then there exists a unique, global  mild solution to \eqref{eq:SPDE}.
\end{theorem}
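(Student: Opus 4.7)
The plan follows the classical truncation-and-a-priori-bound template. Local Lipschitz continuity of $b$ and $\sigma$ yields, via standard Walsh--Dalang theory, a unique global mild solution $u_N$ of the SPDE whose coefficients have been truncated to agree with $b,\sigma$ on $[-N,N]$ and extended constantly outside. Setting $\tau_N = \inf\{t \geq 0 : \sup_{x \in D}|u_N(t,x)| \geq N\}$, the processes $u_N$ and $u_{N+1}$ agree on $[0,\tau_N]$, so a mild solution to \eqref{eq:SPDE} exists on $[0,\tau_\infty)$ with $\tau_\infty := \lim_N \tau_N$. Global existence therefore reduces to producing a Lyapunov-type quantity whose moments are bounded uniformly in $N$ and which blows up as $N \to \infty$.

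For the a priori estimate I would choose a functional $\Phi(u) = \int_0^{|u|}\phi(v)\,dv$ adapted to $h$, and work with $\E\sup_{s \leq t\wedge\tau_N,\, x\in D}\Phi(u_N(s,x))$. Applying Burkholder--Davis--Gundy to the stochastic integral in the mild formulation produces a spatial quadratic-variation integrand of the form $G_{t-s}^2(x,y)\sigma^2(u_N(s,y))$, while an Itô-in-the-mild-sense calculation contributes a deterministic piece of size $\phi(u_N)b(u_N) + \tfrac{1}{2}\phi'(u_N)\sigma^2(u_N)$. In case (a), the pointwise inequality $(\tfrac{1}{2\pi}+|u|)h(|u|) \leq \sigma^2(u)/(4\pi)$ is exactly calibrated so that, after incorporating the $L^2$ bound on the heat kernel on $[-\pi,\pi]$, the combined integrand is dominated by a constant multiple of $h(|u_N|)$. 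The convexity hypothesis \eqref{eq:convexity-case-a} then enters through Lemma \ref{lem:Jensen}, which transfers spatial averages of $h(u_N)$ through $h$ and closes the argument via Gronwall. In case (b), the Osgood condition \eqref{eq:h-osgood-case-b} replaces the Gronwall step by an Osgood ODE comparison, while \eqref{eq:h-growth-case-b} together with the lower bound $\sigma^2 \geq c|u|^{2\gamma}$ ensures that the stochastic contribution cannot destroy the Osgood control.

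The main obstacle is that explosion of \eqref{eq:SPDE} is a genuine spatial-supremum phenomenon, yet the $L^2$ heat-kernel factor that arises from BDG interacts badly with pointwise-in-$x$ estimates at Mueller's critical $|u|^{3/2}$ scaling. The constants in \eqref{eq:sigma-upper-lower-case-a} are tuned precisely to the heat kernel on $[-\pi,\pi]$, so any slack in the constant bookkeeping would destroy the proof. I expect the technical core to be the combination of (i) a factorization or Kolmogorov-type upgrade converting a spatial $L^p$-moment bound into a genuine supremum bound while preserving the critical scaling, and (ii) the Jensen/convexity manipulation of Lemma \ref{lem:Jensen} applied in the precise form required to make the deterministic plus BDG drift cancel at the right rate. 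These two ingredients are what make the $|u|^{3/2}$ threshold accessible despite the presence of an accretive superlinear $b$.
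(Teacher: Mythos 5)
Your truncation/consistency setup matches the paper, but the a priori estimate you propose has a genuine gap. The central problem is the step where an ``It\^o-in-the-mild-sense calculation contributes a deterministic piece of size $\phi(u_N)b(u_N)+\tfrac12\phi'(u_N)\sigma^2(u_N)$.'' For the heat equation driven by space-time white noise, $t\mapsto u(t,x)$ is not a semimartingale for fixed $x$ (it is only H\"older-$1/4$ in time, with infinite quadratic variation), so there is no pointwise It\^o formula producing such a correction, and no choice of $\Phi$ makes drift and noise cancel pointwise. The paper's cancellation happens at a different level: after forcing positivity via Mueller's $v^{-\alpha}$ regularization and the comparison principle, the $L^1$ norm $I(t)=\int v(t,x)\,dx$ \emph{is} a scalar semimartingale with quadratic variation $\int\sigma^2(v)\,dx\,dt$, and applying It\^o to the concave function $\log(1+I)$ produces the term $-\tfrac12\int\sigma^2(v)\,dx/(1+I)^2$ with a \emph{negative} sign. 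The hypothesis $(\tfrac1{2\pi}+|u|)h(|u|)\le\tfrac1{4\pi}\sigma^2(u)$, combined with the Jensen-type Lemma \ref{lem:Jensen} (not with any heat-kernel $L^2$ constant, which plays no role here), gives $\int b(v)\,dx/(1+I)\le \tfrac12\int\sigma^2(v)\,dx/(1+I)^2+C$, so the noise absorbs the superlinear drift. This is why the \emph{lower} bound on $\sigma$ prevents $L^1$ explosion; your scheme has no mechanism by which the noise helps.

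Second, even granting an a priori bound, a single Gronwall or Osgood step on $\E\sup_{s,x}\Phi(u_N)$ cannot close at the $|u|^{3/2}$ threshold: Proposition \ref{prop:moment} bounds the supremum of the stochastic convolution by $C_pT^{\frac{p}{4}-\frac32}\E\int\int|\varphi|^p$, which for $\sigma(u)\sim|u|^{3/2}$ grows like the $(3p/2)$-th power of the supremum and cannot be compared back to $\Phi$ by Gronwall. The paper instead runs a two-stage argument: first the $L^1$ norm is shown to stay below $M$ up to a stopping time, which yields the quadratic-variation bound $\E\int_0^{\tau^1_M\wedge\tau^{\inf}_\e}\int\sigma^2(v)\,dx\,ds\le M^2$; then a sequence of stopping times $\rho_n$ tracking when $|v(t)|_{L^\infty}$ triples or falls by one third, together with Proposition \ref{prop:moment} and Chebyshev at exponent $p$, shows the probability of each tripling is bounded by $C\,\E\int_{\rho_n}^{\rho_{n+1}}\int\sigma^2(v)\,dy\,ds$ (with all powers of $3^m$ cancelling exactly because $\sigma^2(u)\le C(1+|u|^3)$), and Borel--Cantelli forbids infinitely many triplings. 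You would need to supply both the positivity/comparison machinery and this tripling/Borel--Cantelli scheme, or a genuine substitute for each, before your outline becomes a proof.
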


We present example applications of this theorem. The first example considers the case of polynomial forcing $b(u)\approx u^\beta$ and $\sigma(u)\approx u^\gamma$. The second example considers nonexplosive Osgood-type forcing like $b(u)\approx u\log(u)$ or $b(u) \approx u \log(u)\log\log(u)$, etc.

\begin{example}
	Assumption \ref{assum}(a) is satisfied by polynomials of the form
	\[b(u)=Au^\beta, \sigma(u)=u^\gamma \]
	where either $\beta + 1< 2\gamma \leq 3$ or $\beta + 1=2\gamma \leq 3$ and $A$ is sufficiently small.
	
	When combined with other results from the literature, Theorem \ref{thm:main} demonstrates that when $\beta \in (1,2]$ and $\gamma$ is sufficently small or sufficiently large, solutions can explode, but when $\gamma$ is in the range $[\frac{1+\beta}{2}, \frac{3}{2}]$, solutions can never explode. When $\sigma$ is bounded or grows slowly, the explosive superlinear force $b$ causes the $L^1$ norm of the solution to reach infinity in finite time, implying that the $L^\infty$ norm also reaches infinity. When $\sigma$ grows sufficiently quickly, the large stochastic fluctuations counteract the explosive force of $b$, and the $L^1$ norm remains finite. When the $L^1$ norm stays finite, the situation is similar to the case investigated by Mueller \cite{mueller-1991,mueller-1998,mueller-2000,ms-1993}: the $L^\infty$ norm will not explode if $\gamma\leq  \frac{3}{2}$, but the $L^\infty$ norm can explode if $\gamma>\frac{3}{2}$. Consider the following ranges for $\gamma$
	\begin{itemize}
		\item If $\beta \in (1,2]$ and $\gamma=0$, so that $\sigma$ is bounded away from $0$ and $\infty$, then the results of Bonder and Groisman and Foondun and Nualart prove that solutions will explode with probability one \cite{bg-2009,fn-2021}.
		\item If $\beta \in (1,2]$ and $\gamma \in \left(0,\frac{\beta+1 }{2}\right]$, then the problem of explosion remains open, to the best of my knowledge. I believe that solutions can explode with positive probability in this regime.
		\item If $\beta \in (1,2)$ and $\gamma \in (\frac{1+\beta}{2}, \frac{3}{2}]$, then Theorem \ref{thm:main}, above, proves that solutions cannot explode. 
		\item If $\beta \in (1,2]$ and $\gamma = \frac{1 + \beta}{2}$, then for sufficiently small $A$, Theorem \ref{thm:main} proves that solutions cannot explode.
		\item If $\beta \in (1,2]$ and  $\gamma> \frac{3}{2}$, then the results of Mueller \cite{mueller-2000} and a comparison principle \cite{mueller-1991-support,kotelenez-1992} proves that solutions can explode with positive probability. 
	\end{itemize}
	 
\end{example}

\begin{example}
	Assumption \ref{assum}(a) is satisfied by the example
	\begin{align}
		b(u) = u\log(u) \text{ and } A|u||\log(u)|^{\frac{1}{2}} \leq \sigma(u) \leq u^{\frac{3}{2}},
	\end{align}
	for sufficently large $A$, and therefore Theorem \ref{thm:main} proves that solutions cannot explode.
	Compare this to Dalang, Khoshnevisan, and Zhang, who proved that there is no explosion when $b(u)= u\log(u)$ and $\sigma(u) \leq C|u||\log(u)|^{\frac{1}{4}}$ using completely different arguments \cite{dkz-2019}. This example proves that the upper bound on the growth rate of $\sigma$ identified by \cite{dkz-2019} is not optimal. Strangely, these two results leave a gap. What happens if $|u||\log(u)|^\frac{1}{4}<\sigma(u)<|u||\log(u)|^{\frac{1}{2}}$? 
	
	Assumption \ref{assum}(b) is included to answer this question. Because $b(u) = u\log(u)$ satisfies a non-explosive Osgood condition \eqref{eq:Osgood-no-explode}, Theorem \ref{thm:main} says that solutions will not explode for any $\sigma$ satisfying $u^\gamma< \sigma(u) < u^{\frac{3}{2}}$ for any $\gamma>\frac{1}{2}$. When combined with \cite{dkz-2019}, this proves that when $b(u)=u\log(u)$ solutions will not explode as long as $\sigma(u)$ satisfies either
	\[\sigma(u) \leq C|u||\log(u)|^{\frac{1}{4}}\]
	or there exists $\gamma>\frac{1}{2}$ such that
	\[c|u|^\gamma < \sigma(u) \leq C(1  + |u|^{\frac{3}{2}}).\]
	In particular, the class includes all $\sigma$ of the form $\sigma(u)  = |u|^\gamma(\log|u|)^\theta$ for $0 \leq \gamma < \frac{3}{2}$ and $\theta \geq0$ and for $\gamma=\frac{3}{2}$ and $\theta =0$.
	
	If $b$ grows faster than $u\log u$, like $u\log(u)\log\log(u)$ or $u \log(u)\log\log(u)\log\log\log(u)$, and still satisfies the non-explosive Osgood condition \eqref{eq:Osgood-no-explode}, then combining the results of \cite{salins-2022} and Theorem \ref{assum} lead to the same conclusion. We present this claim as a corollary.
\end{example}

\begin{corollary} \label{cor:main}
	Assume that $b$ and $\sigma$ are locally Lipschitz continuous and that there exists a positive, increasing, convex $h: [0,+\infty) \to [0,+\infty)$ such that 
	$b$ satisifies \eqref{eq:b-upper-case-b}, $h$ satisfies the non-explosive Osgood condition \eqref{eq:h-osgood-case-b}, and $\sigma$ satisfies either that there exists $c>0$, $C_0>0$, and $\gamma_0>\frac{1}{2}$ such that
	\[c_0 |u|^{\gamma_0} \leq \sigma(u) \leq C_0( 1+ |u|^{\frac{3}{2}})\]
	or there exists $\gamma_1 \in (0,1/4)$ and $C_1>0$ such that
	\[|\sigma(u)| \leq C_1(1 + |u|^{1-\gamma}(h(|u|))^{\gamma}).\]
	Then there exists a unique global mild solution to \eqref{eq:SPDE}. 
\end{corollary}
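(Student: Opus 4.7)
The plan is to address the two alternative hypotheses on $\sigma$ separately, with each case reducing to a result already in hand.

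Under the first alternative $c_0|u|^{\gamma_0}\le\sigma(u)\le C_0(1+|u|^{3/2})$ with $\gamma_0>\tfrac12$, I would verify Assumption \ref{assum}(b) and then invoke Theorem \ref{thm:main}. Conditions \eqref{eq:b-upper-case-b} and \eqref{eq:h-osgood-case-b} are hypotheses. Choosing $\gamma\in(\tfrac12,\min(\gamma_0,1))$, the lower bound $\sigma^2(u)\ge c_0^2|u|^{2\gamma_0}$ yields the lower half of \eqref{eq:sigma-upper-lower-case-b} (for $|u|\ge 1$ directly, and after a routine adjustment of the constant for $|u|<1$), and the upper half $\sigma^2(u)\le C(1+|u|^3)$ is immediate. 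The nontrivial step is verifying the growth condition \eqref{eq:h-growth-case-b}: since $h$ is convex and increasing, WLOG with $h(0)=0$, the ratio $h(u)/u$ is nondecreasing, and one can use this together with the Osgood condition to prove $h(u)=o(u^{1+\epsilon})$ for every $\epsilon>0$. Indeed, if $h(u_j)\ge c\,u_j^{1+\epsilon}$ for some sequence $u_j\to\infty$, then monotonicity of $h(u)/u$ gives $h(u)\ge c\,u_j^{\epsilon}\,u$ on $[u_j,u_{j+1}]$, whose reciprocal integrates to a convergent geometric-type sum, contradicting Osgood. Consequently $h(u^2)/u^{2\gamma+1}\to 0$ for every $\gamma>\tfrac12$, which gives the remaining hypothesis of Assumption \ref{assum}(b).

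Under the second alternative $|\sigma(u)|\le C_1(1+|u|^{1-\gamma_1}(h(|u|))^{\gamma_1})$ with $\gamma_1\in(0,\tfrac14)$, the hypothesis coincides with the class of noise coefficients covered in \cite{salins-2022}, where global existence of the mild solution under local Lipschitz continuity plus the Osgood condition on $h$ is already proved; that result applies directly.

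Uniqueness in both cases is standard, following from the local Lipschitz continuity of $b$ and $\sigma$ via concatenation of locally unique mild solutions up to the (infinite) explosion time. The main obstacle is the $o(u^{1+\epsilon})$ polynomial growth bound on $h$ needed to pass from the bare Osgood hypothesis to the stronger growth assumption \eqref{eq:h-growth-case-b}; once that reduction is in place, the rest of the argument is bookkeeping.
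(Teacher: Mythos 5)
Your two-case structure --- reduce the first alternative on $\sigma$ to Assumption \ref{assum}(b) and invoke Theorem \ref{thm:main}, and cite \cite{salins-2022} for the second alternative --- is exactly the route the paper intends, and you correctly isolate the one step that is not bookkeeping: the Corollary does not list \eqref{eq:h-growth-case-b} among its hypotheses, so it must be derived. Unfortunately your derivation is false: a positive, increasing, convex $h$ with $\int_1^\infty du/h(u)=\infty$ need \emph{not} satisfy $h(u)=o(u^{1+\epsilon})$. Take knots $u_1<w_1<u_2<w_2<\cdots$ with $w_j=u_je^{2u_j}$ and $u_{j+1}=2w_j$, and let $h$ be piecewise linear with $h(u_j)=u_j^2$, slope $2u_j$ on $[u_j,w_j]$, and slope $\bigl(u_{j+1}^2-h(w_j)\bigr)/(u_{j+1}-w_j)$ on $[w_j,u_{j+1}]$. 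One checks the slopes are nondecreasing, so $h$ is convex and increasing; on $[u_j,w_j]$ one has $h(u)=2u_ju-u_j^2\le 2u_ju$, hence $\int_{u_j}^{w_j}du/h(u)\ge \tfrac{1}{2u_j}\log(w_j/u_j)=1$ and the Osgood integral diverges; yet $h(u_j)/u_j^{3/2}=u_j^{1/2}\to\infty$, so \eqref{eq:h-growth-case-b} fails for every $\gamma\in(1/2,1)$, and no convex majorant $\tilde h\ge h$ can satisfy it either, since $\tilde h(u_j)\ge u_j^2$. The precise flaw in your argument is the ``convergent geometric-type sum'': the terms are $\tfrac{1}{c\,u_j^{\epsilon}}\log(u_{j+1}/u_j)$, and nothing in the hypotheses controls the gaps $u_{j+1}/u_j$; when they are super-exponential (as above) the sum diverges and no contradiction with the Osgood condition is obtained.

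Consequently the first case does not reduce to Theorem \ref{thm:main} from the Corollary's stated hypotheses alone; \eqref{eq:h-growth-case-b} has to be assumed (the paper itself concedes it is ``included to remove pathological counterexamples,'' and the printed Corollary appears simply to omit it --- your proposal inherits, rather than repairs, that gap). A secondary issue in the same case: for $\gamma_0\ge 1$ the bound $\sigma^2(u)\ge c_0^2|u|^{2\gamma_0}$ does not yield $c|u|^{2\gamma}\le\sigma^2(u)$ for $|u|<1$ with $\gamma<\gamma_0$ by ``a routine adjustment of the constant,'' since $|u|^{2\gamma_0}/|u|^{2\gamma}\to 0$ as $u\to 0$; one must instead observe that the lower bound on $\sigma$ is only ever applied where $v\ge\e$, so the constant may be taken $\e$-dependent, which is harmless in the final limiting argument. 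Your treatment of the second alternative (direct citation of \cite{salins-2022}) matches the paper and is fine.
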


Under either set of assumptions in Assumption \ref{assum}, the lower bound on $\sigma$ can be used to prove that the $L^1$ norm of local solutions stays finite. Then we use the methods from \cite{s-2024-aop} to prove that the upper bound on $\sigma$ and the finiteness of the $L^1$ norm together imply that the $L^\infty$ norm cannot explode. Mueller proved that when $\sigma(u)$ grows faster than $|u|^{\frac{3}{2}}$, the $L^\infty$ norm can explode with positive probability \cite{mueller-2000}. Therefore, the upper bound on $\sigma$ in Assumptions \ref{assum}(a)--(b) is optimal. 

For the purposes of this introduction, assume additionally that $b(0)\geq 0$, $u(0,x) \geq 0$,  and $\sigma(0)=0$ so that, by the comparison principle, mild solutions to \eqref{eq:SPDE} stay positive \cite{kotelenez-1992,mueller-1991-support}. In this case, the $L^1$ norm is local semimartingale satisfying
\begin{align*}
	I(t) =&\int_{-\pi}^\pi u(t,x)dx = \int_{-\pi}^\pi u(0,x)dx + \int_0^t \int_{-\pi}^\pi b(u(s,x))dxds \\
	&+ \int_0^t \int_{-\pi}^\pi \sigma(u(s,x))W(dxds),
\end{align*}
until the time of explosion. Formally apply Ito formula for $\log(1 + I(t))$ to see
\begin{equation}
	\log(1 + I(t)) = \log(1 + I(0)) + \int_0^t \int_{-\pi}^\pi \left(\frac{b(u(s,x))}{1 + I(s)} - \frac{\sigma^2(u(s,x))}{2(1 + I(s))^2} \right)dx ds + N(t)
\end{equation}
where $N(t)$ is a local martingale. We will prove in Section \ref{SS:subsec-Osgood-case} that Assumption \ref{assum}(a) implies that that integral $ \int \left(\frac{b(u(s,x))}{1 + I(s)} - \frac{\sigma^2(u(s,x))}{2(1 + I(s))^2} \right)dx$ is bounded by a non-random constant with probability one, implying 
\begin{equation}
	\log(1 + I(t)) \leq \log(1 + I(0)) + C_1 t + N(t),
\end{equation}
which cannot reach infinity in finite time because $N(t)$ is a nonnegative local martingale.

Under Assumption \ref{assum}(b), we will prove in Section \ref{SS:subsec-Osgood-case} that the Lebesgue integral is bounded by
\[\int_0^t \int_{-\pi}^\pi \left(\frac{b(u(s,x))}{1 + I(s)} - \frac{\sigma^2(u(s,x))}{(1 + I(s))^2} \right)dx ds
\leq \int_0^t \tilde{g}(\log(1 + I(s)))ds + Ct\]
for some nonnegative increasing function $\tilde{g}$ satisfying the non-explosive Osgood condition \eqref{eq:Osgood-no-explode}. One more application of Ito formula with the nonnegative, increasing, concave function $G(u) = \int_0^u \frac{1}{\tilde g(v)}dv$ implies
\begin{equation}
	G(\log(1 + I(t))) \leq G(\log(1 + I(0))) + Ct + N(t),
\end{equation}
which cannot reach infinity in finite time because $N(t)$ is nonnegative local martingale.

Notice that the upper bounds on $\sigma$ are not relevant for proving that the $L^1$ norms stay finite. Under either set of our assumptions on the lower bound of $\sigma$, the $L^1$ norm of the solution would stay finite, up to the time of explosion, even if $\sigma$ grows faster than $|u|^\frac{3}{2}$. The upper bound on $\sigma$ is needed to prove that the finiteness of the $L^1$ norms implies that the $L^\infty$ norm cannot explode. 

Once, we have proven that the $L^1$ norm remains finite, we can follow the proof of \cite{s-2024-aop} to prove that the $L^\infty$ norm cannot explode. The proof involves defining a sequence of stopping times that keep track of when the $L^\infty$ of the process triples or falls to a third of its previous value. Using moment estimates on the supremum norm and the quadratic variation of the $L^1$ norm, we can prove that the $L^\infty$ norm can only triple a finite number of times, and therefore solutions cannot explode.

In Section \ref{S:local-soln} we define local mild solutions, global mild solutions, and explosion, and we fix some notation. In Section \ref{S:positive}, we employ a trick proposed in \cite{mueller-1998} to add a strong positive force to the equation to keep solutions positive. In Section \ref{S:L1}, we prove that the $L^1$ of solutions stays finite. Different proofs are needed depending on whether we assume Assumption \ref{assum}(a) or (b). In Section \ref{S:Linfinity}, we prove the main non-explosion result, Theorem \ref{thm:main}. 

%
%
%

\section{Definition of local and global mild solution} \label{S:local-soln}
Let $D:=[-\pi,\pi]$ with periodic boundary be the spatial domain. Define the periodic heat kernel for $x \in [-\pi,\pi]$ by
\begin{equation}
	G(t,x) = \frac{1}{2\pi} + \frac{1}{\pi} \sum_{k=1}^\infty e^{-|k|^2 t} \cos(kx).
\end{equation}
For any $n \in \mathbb{N}$, define the cutoff versions of $b$ and $\sigma$ by

\begin{equation}
	b_n(u) = \begin{cases}
		b(-n) & \text{ if } u<-n,\\
		b(u) & \text{ if } u\in(-n,n)\\
		b(n) & \text{ if } u>n.
	\end{cases}
\end{equation}
\begin{equation}
	\sigma_n(u) = \begin{cases}
		\sigma(-n) & \text{ if } u<-n,\\
		\sigma(u) & \text{ if } u\in(-n,n)\\
		\sigma(n) & \text{ if } u>n.
	\end{cases}
\end{equation}
Because we assumed in Assumption \ref{assum} that $b$ and $\sigma$ are locally Lipschitz continuous, the cutoff functions $b_n$ and $\sigma_n$ are globally Lipschitz continuous. The mild solution to the cutoff SPDE
\begin{equation}
	\frac{\partial u_n}{\partial t}(t,x) = \frac{\partial^2}{\partial x^2} u_n(t,x) + b_n(u_n(t,x)) + \sigma_n(u_n(t,x))\dot{W}(t,x)
\end{equation}
is defined to be the solution to the integral equation
\begin{align}
	u_n(t,x) = &\int G(t,x-y)u(0,y)dy + \int_0^t \int G(t-s,x-y)b_n(u_n(s,y))dyds  \nonumber\\
	&+ \int_0^t \int G(t-s,x-y)\sigma_n(u_n(s,y))W(dyds).
\end{align}
Because $b_n$ and $\sigma_n$ are globally Lipschitz continuous, classical results prove that for each $n$, there exists a unique mild solution $u_n(t,x)$ \cite{dpz,dalang-1999}.
Furthermore, these solutions are all \textit{consistent} in the sense that for any $n<m$
\begin{equation}
	u_n(t,x) = u_m(t,x) \text{ for all } x \in D \text{ and } t \in [0, \tau^\infty_n]
\end{equation}
where
\begin{equation}
	\tau^\infty_n := \inf\left\{t>0: \sup_{x \in D} |u_n(t,x)| > n\right\}.
\end{equation}
The consistency is a consequnece of the uniqueness of each $u_n(t,x)$ and the fact that $b_n(u) = b_m(u)$ and $\sigma_n(u)=\sigma_m(u)$ for all $|u|\leq n$.

Define the \textit{explosion time} by
\begin{equation}
	\tau^\infty_\infty = \sup_n \tau^\infty_n.
\end{equation}
We can uniquely define a \textit{local mild solution} by
\begin{equation}
	u(t,x) = u_n(t,x) \text{ for all } x \in D, t \in [0,\tau^\infty_n].
\end{equation}
The local mild solution is well defined for all $t \in [0,\tau^\infty_\infty)$, but it generally cannot be extended beyond $\tau^\infty_\infty$.
If $\tau^\infty_\infty<+\infty$, then we say that $u(t,x)$ explodes in finite time. The local mild solution is called a global mild solution if
\begin{equation}
	\Pro \left(\tau^\infty_\infty =\infty \right)=1.
\end{equation}

Throughout the paper, the constant $C>0$ can refer to any positive constant and its value can change from line to line. An integral symbol without explicit bounds refers to integration over the spatial domain
\[\int v(x)dx := \int_{-\pi}^\pi v(x)dx.\]
We use $L^p$ to refer to the standard $L^p$ spaces over the spatial domain with the norms
\begin{equation}
	|v|_{L^p} := \left( \int |v(x)|^p dx\right)^{\frac{1}{p}} \text{ and } |v|_{L^\infty} := \sup_{x \in [-\pi,\pi]} |v(x)|.
\end{equation}

\section{Positve local mild solutions} \label{S:positive}
As mentioned in the introduction, the analysis of the $L^1$ norm is easiest when the solutions $u(t,x)\geq 0$ for all $t$ and $x$. In this case, the $L^1$ norm is a local semimartingale. We follow the method of Mueller \cite{mueller-1998} to add an additional force to the equation that guarantees that solutions stay nonnegative. Specifically, let $\alpha>3$ and let $v(t,x)$ be the local mild solution to 
\begin{equation} \label{eq:v}
	\frac{\partial v}{\partial t}(t,x) = \frac{\partial^2 v}{\partial x^2}(t,x) + b(v(t,x)) + (v(t,x))^{-\alpha} + \sigma(v(t,x))\dot{W}(t,x)
\end{equation}
with initial data $v(0,x) = \max\{u(0,x),1\}$. 

We also define $v_-(t,x)$ to be the local mild solution to
\begin{equation} \label{eq:v-}
	\frac{\partial v_-}{\partial t}(t,x) = \frac{\partial^2 v_-}{\partial x^2}(t,x) - b(-v_-(t,x)) + (v(t,x))^{-\alpha} - \sigma(-v_-(t,x))\dot{W}(t,x)
\end{equation}
with initial data $v_-(0,x) = \max\{-u(0,x),1\}$.

A result from \cite{mueller-1998} proves that both $v_-(t,x)$ and $v(t,x)$ stay nonnegative until their explosion time.
Furthermore, the comparison principle of \cite{mueller-1991-support,kotelenez-1992}
 implies that $v_-(t,x) \leq u(t,x) \leq v(t,x)$ up until the time that $v$ or $v_-$ explodes. Therefore, it is sufficient to prove that these constructed positive solutions $v(t,x)$ and $v_-(t,x)$ never explode.
 
 The analysis for $v_-$ and $v$ are identical, so for the rest of the paper, we only analyze $v(t,x)$. Define the following stopping times  $v(t,x)$. Define for $n>0$,
 \begin{equation}
 	\tau^\infty_n := \inf\{t>0: |v(t)|_{L^\infty}>n\}
 \end{equation}
 and
 \begin{equation}
 	\tau^\infty_\infty: = \sup_n \tau^\infty_n.
 \end{equation}
 Define for $\e \in (0,1)$,
 \begin{equation}
 	\tau^{\inf}_\e: = \inf\left\{t>0: \inf_x v(t,x) <\e \right\}.
 \end{equation}
 and
 \begin{equation}
 	\tau^{\inf}_0 := \sup_{\e \in (0,1)} \tau^{\inf}_\e.
 \end{equation}
 
 Using standard localization procedures, there exists a unique local mild solution for $v(t,x)$ that is defined until the time $\tau^\infty_\infty \wedge \tau^{\inf}_0$ \cite{s-2024-aop}. Furtheremore, the local mild solutions satisfy the comparison principle of \cite{mueller-1991-support,kotelenez-1992}
 \begin{proposition}\label{prop:comparison}
 	The local mild solutions $u(t,x)$ and $v(t,x)$ satisfy the comparison principle
 	\begin{equation}
 		\Pro \left(u(t,x) \leq v(t,x) \text{ for all } x \in D, t \in [0,\tau^\infty_\infty \wedge \tau^{\inf}_0]\right)=1.
 	\end{equation}
 \end{proposition}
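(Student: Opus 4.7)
The plan is to reduce the statement to the classical comparison principle for SPDEs with globally Lipschitz coefficients of \cite{mueller-1991-support,kotelenez-1992}, via a localization scheme matching the one used to build local mild solutions in Section \ref{S:local-soln}. For each $n\in\nat$ and $\e\in(0,1)$, introduce the truncated regularizer $\psi_\e(\xi):=(\xi\vee\e)^{-\alpha}$, which is nonnegative, bounded, and globally Lipschitz on $\R$. Let $\tilde u_n$ be the unique global mild solution of the SPDE with drift $b_n$, diffusion $\sigma_n$, and initial data $u(0,\cdot)$, and let $\tilde v_{n,\e}$ be the unique global mild solution of the SPDE with drift $b_n+\psi_\e$, diffusion $\sigma_n$, and initial data $v(0,\cdot)=\max\{u(0,\cdot),1\}$, both driven by the same space-time white noise $\dot W$. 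Because all coefficients are globally Lipschitz, existence and uniqueness for both equations is classical.

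Next I would apply the Kotelenez-Mueller comparison principle to $\tilde u_n$ and $\tilde v_{n,\e}$. All hypotheses are satisfied here: the diffusion coefficient is the same function $\sigma_n$ in both SPDEs, the initial data satisfy $u(0,x)\leq v(0,x)$ pointwise, and the drifts obey $b_n(\xi)\leq b_n(\xi)+\psi_\e(\xi)$ for every $\xi\in\R$ since $\psi_\e\geq 0$. The theorem of \cite{mueller-1991-support,kotelenez-1992} therefore yields the pointwise inequality $\tilde u_n(t,x)\leq \tilde v_{n,\e}(t,x)$ for all $t\geq 0$ and $x\in D$ with probability one.

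To finish, I would remove the truncations by a standard consistency argument. On the interval where $|u(s)|_{L^\infty}\leq n$, the cutoff coefficients $b_n,\sigma_n$ agree with $b,\sigma$ along $u$, so pathwise uniqueness forces $\tilde u_n\equiv u$ up to $\inf\{s:|u(s)|_{L^\infty}>n\}$. Likewise, on the interval where $|v(s)|_{L^\infty}\leq n$ and $\inf_x v(s,x)\geq \e$, the drift $b_n(v)+\psi_\e(v)$ coincides with $b(v)+v^{-\alpha}$, so $\tilde v_{n,\e}\equiv v$ up to $\tau^\infty_n\wedge \tau^{\inf}_\e$. Transferring the pointwise inequality to $u$ and $v$ on this stochastic interval and then letting $n\to\infty$ and $\e\downarrow 0$ exhausts $[0,\tau^\infty_\infty\wedge\tau^{\inf}_0]$ and gives the claim. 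The only substantive step is checking the hypotheses of the classical comparison theorem in our setting; everything else is bookkeeping through the localization, and the positivity $\psi_\e\geq 0$ is what forces the inequality in the required direction.
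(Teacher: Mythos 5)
Your proof is correct and is essentially the argument the paper intends: the paper states this proposition without proof, simply invoking the comparison principle of \cite{mueller-1991-support,kotelenez-1992} together with the standard localization used to construct the local mild solutions, and your write-up supplies exactly those details (cutting off $b,\sigma$ at level $n$, replacing $v^{-\alpha}$ by the bounded globally Lipschitz $(\xi\vee\e)^{-\alpha}$, comparing the two globally Lipschitz systems driven by the same noise, and identifying them with $u$ and $v$ up to $\tau^\infty_n\wedge\tau^{\inf}_\e$ by pathwise uniqueness). The only cosmetic point is that the resulting inequality really holds on the further intersection with the lifetime of $u$ itself (ruling out $u\to-\infty$ before $v$ explodes requires the separate comparison with $-v_-$), but the paper's own statement elides this in exactly the same way.
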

 
 The results of Mueller prove that because we added $v^{-\alpha}$ for $\alpha>3$ to the equation, $v(t,x)$ can never hit zero \cite{mueller-1998}.
 \begin{proposition}[Lemma 2.7 of \cite{mueller-1998}] \label{prop:positivity}
 	For any fixed $T>0$,
 	\begin{equation}
 		\lim_{\e \to 0} \Pro(\tau^{\inf}_\e <T \wedge \tau^\infty_\infty) = 0.
 	\end{equation}
 \end{proposition}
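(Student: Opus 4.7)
The plan is to verify that \eqref{eq:v} fits the hypotheses of Lemma 2.7 of \cite{mueller-1998} and invoke the result. The strategy behind Mueller's proof is to apply an Ito-type formula to the functional $F_p(v(t)) := \int v(t,x)^{-p}\,dx$ for a small exponent $p > 0$, along the local mild solution $v$ stopped at $\tau^\infty_n \wedge \tau^{\inf}_\e \wedge T$. Since $v(0,x)\geq 1$, $F_p(v(0))$ is finite; and because $F_p(v(t)) \geq 2\pi\e^{-p}$ whenever $\inf_x v(t,x)\leq \e$, a uniform-in-$\e$ moment bound on $F_p(v(t \wedge \tau^{\inf}_\e \wedge \tau^\infty_n))$ combined with Chebyshev's inequality will yield $\Pro(\tau^{\inf}_\e<T\wedge \tau^\infty_n)\to 0$, and then sending $n\to\infty$ gives the claim.

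A formal Ito computation for $F_p$ produces three competing drift contributions. The Laplacian piece, after integration by parts, gives the non-positive term $-p(p+1)\int v^{-p-2}(\partial_x v)^2\,dx$. The deterministic drift $-p\int v^{-p-1}[b(v)+v^{-\alpha}]\,dx$ is dominated by its $-p\int v^{-p-1-\alpha}\,dx$ part near zero, since $b$ is locally Lipschitz and therefore bounded near the origin, producing at worst an $O(v^{-p-1})$ correction. The quadratic variation contribution, using $\sigma^2(u)\leq C(1+u^3)$, grows at worst like $Cv^{-p-2}$ as $v\downarrow 0$. Because $\alpha>3$ makes $v^{-p-1-\alpha}$ dominate $v^{-p-2}$ pointwise for small $v$, the repulsive drift absorbs the quadratic variation, leaving an inequality of the form $dF_p(v(t)) \leq C(1+F_p(v(t)))\,dt + dM_t$ for a local martingale $M$. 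Gronwall and the usual localization argument then produce the required uniform moment bound.

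The main obstacle, and the reason the full argument in \cite{mueller-1998} is nontrivial, is that this formal Ito formula is not directly justified for solutions of space-time white noise driven SPDEs: the associated heat kernel has a singular diagonal and $v$ is only H\"older continuous in space, so the pointwise quadratic variation and the integration by parts must be interpreted with care. Mueller handles this by mollifying the noise, performing the computation for the smooth approximation where all terms are well-defined, and passing to the limit while preserving the sign structure that makes the repulsive term dominate the noise term. Since \eqref{eq:v} has a locally Lipschitz $b$ and a $\sigma$ with $\sigma^2(u) \leq C(1+|u|^3)$, which are precisely the structural hypotheses used in \cite{mueller-1998}, Lemma 2.7 of that paper applies verbatim to $v$ and yields the stated limit.
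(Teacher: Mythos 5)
Your proposal is correct and follows essentially the same route as the paper: both reduce the claim to Lemma 2.7 of \cite{mueller-1998}, with the only new point being that $b$ is locally Lipschitz (hence bounded near the origin), so that $u^{-\alpha}+b(u)\geq \tfrac{1}{2}u^{-\alpha}$ for small $u$ and the repulsive drift still dominates exactly as in Mueller's setting. The paper's own proof is just this observation plus a citation, so your additional sketch of the internals of Mueller's argument is extra detail rather than a different method.
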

 \begin{proof}
 	The proof is the same as in \cite{mueller-1998}. Our assumptions look a little bit different because $b$ is nonzero, but this is not a problem. We assumed that $b$ is continuous near $0$. Therefore, there exists $\e_0>0$ such that for all $|u|< \e_0$,
 	\[u^{-\alpha} + b(u) \geq \frac{1}{2}u^{-\alpha}.\]
 	The result follows by Mueller's proof and comparison principle.
 \end{proof}

\section{Analysis of the $L^1$ norm} \label{S:L1}

Let $v(t,x)$ denote the local mild solution to \eqref{eq:v}.
As in the previous section, let 
\begin{equation} \label{eq:tau-infty}
	\tau^\infty_n = \inf\{t>0: |v(t)|_{L^\infty} \geq n\},
\end{equation}
and
\begin{equation} \label{eq:tau-inf}
	\tau^{\inf}_\e = \inf\left\{t>0: \inf_{x \in D} v(t,x) < \e\right\}.
\end{equation}
We introduce the $L^1$ stopping times for $M>1$
\begin{equation} \label{eq:tau-L1}
	\tau^1_M = \inf\{t>0: |v(t)|_{L^1} \geq M\}.
\end{equation}

The goal of this section is to prove that the $L^1$ norm of $v(t,x)$ stays finite. Then, in later sections we can use the finiteness of the $L^1$ norm to prove that the $L^\infty$ norms cannot explode.
Let 
\begin{equation}
	I_{n,\e}(t) := \int v(t \wedge \tau^\infty_n \wedge \tau^{\inf}_\e,x)dx.
\end{equation}
Because the spatial integral of the periodic heat kernel $\int G(t,x-y)dx\equiv 1$ for all $t>0$ and $y \in [-\pi,\pi]$, $I_{n,\e}(t)$ is a nonnegative semimartingale solving
\begin{align} \label{eq:I-semimartingale}
	I_{n,\e}(t)&= I_{n,\e}(0) + \int_0^{t \wedge \tau^\infty_n \wedge \tau^{\inf}_\e} \int  b(v(s,x))dxds \nonumber\\
	&+\int_0^{t \wedge \tau^\infty_n \wedge \tau^{\inf}_\e} \int (v(s,x))^{-\alpha}dxds+ \int_0^{t \wedge \tau^\infty_n \wedge \tau^{\inf}_\e}\int \sigma(v(s,x))W(dxds).
\end{align}

We divide the analysis of the $L^1$ norm into two cases, depending on whether we assume Assumption \ref{assum}(a) or (b).

\subsection{$L^1$ norm stays finite -- Assumption \ref{assum}(a)} \label{eq:SS-polynomial-case}
The main result of this subsection is the following.
\begin{lemma} \label{lem:L1-finite-assum-a}
	Assume $b$ and $\sigma$ satisfy Assumption \ref{assum}(a). Then there exists $C>0$ such that for any $T>0$, $M>0$, and  $\e \in (0,1)$,
	\begin{equation}
		\Pro \left( \sup_{t \in [0,T\wedge \tau^\infty_\infty \wedge \tau^{\inf}_\e)} \int u(t,x)dx > M \right)
		\leq \frac{\log\left(1+ |u(0)|_{L^1}\right) + (C+2\pi\e^{-\alpha})T}{\log(1 + M)}.
	\end{equation}
	In particular, this proves that the $L^1$ norm stays finite.
\end{lemma}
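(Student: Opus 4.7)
The plan is to apply It\^o's formula to $f(x)=\log(1+x)$ at the nonnegative semimartingale $I_{n,\e}(t)$ from \eqref{eq:I-semimartingale}, show that the resulting drift is bounded above by a deterministic constant of the form $C+2\pi\e^{-\alpha}$, and then convert that into the stated probability estimate via Markov's inequality at the stopping time $\tau^1_M$. Using the logarithm is the natural gauge: after dividing by $(1+I)$, the superlinear growth of $b$ will be matched against the $vh(v)$ contribution to the $\sigma^2$ lower bound in \eqref{eq:sigma-upper-lower-case-a}. Because $v$ is bounded on $[0,T\wedge\tau^\infty_n\wedge\tau^{\inf}_\e]$, the stochastic integral appearing in the It\^o expansion is a genuine $L^2$ martingale, so expectations can be taken without extra localization.

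It\^o's formula produces three drift contributions: the forcing $\int b(v)/(1+I)\,dx$, the positivity correction $\int v^{-\alpha}/(1+I)\,dx$, and the It\^o correction $-\int\sigma^2(v)\,dx/[2(1+I)^2]$. On $[0,\tau^{\inf}_\e]$ one has $v^{-\alpha}\leq\e^{-\alpha}$ pointwise, so the positivity term is bounded by $2\pi\e^{-\alpha}$. Splitting $b(v)\leq\theta(1+v)+h(v)$ from \eqref{eq:b-upper-case-a}, the linear piece gives at most $\theta(2\pi+I)/(1+I)\leq\theta(1+2\pi)$, so the whole task is to absorb the superlinear contribution $\int h(v)/(1+I)\,dx$ into the It\^o correction.

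The central estimate uses $\sigma^2(v)\geq 4\pi(\tfrac{1}{2\pi}+v)h(v)=2h(v)+4\pi v h(v)$ from \eqref{eq:sigma-upper-lower-case-a}. Writing $J:=\int h(v)\,dx$, the remaining nonlinear drift is bounded above by
\begin{equation*}
\frac{J}{1+I}-\frac{J}{(1+I)^2}-\frac{2\pi\int vh(v)\,dx}{(1+I)^2}=\frac{JI-2\pi\int vh(v)\,dx}{(1+I)^2}.
\end{equation*}
I would show this is nonpositive by chaining two Jensen inequalities. Convexity of $h$ gives $J/(2\pi)\geq h(I/(2\pi))$, equivalently $h^{-1}(J/(2\pi))\geq I/(2\pi)$. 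Convexity of $u\mapsto uh^{-1}(u)$, which follows from the hypothesis \eqref{eq:convexity-case-a}, applied to the function $w(x):=h(v(t,x))$ yields $\int vh(v)\,dx=\int h^{-1}(w)\,w\,dx\geq J\,h^{-1}(J/(2\pi))$. Multiplying the two bounds gives $2\pi\int vh(v)\,dx\geq JI$, so the nonlinear drift is nonpositive.

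With the total drift bounded by $\theta(1+2\pi)+2\pi\e^{-\alpha}$, taking expectations at the stopping time $T\wedge\tau^1_M\wedge\tau^\infty_n\wedge\tau^{\inf}_\e$ gives $\E[\log(1+I_{n,\e}(T\wedge\tau^1_M))]\leq\log(1+|u(0)|_{L^1})+(C+2\pi\e^{-\alpha})T$. On the event $\{\tau^1_M\leq T\wedge\tau^\infty_n\wedge\tau^{\inf}_\e\}$ the left-hand side dominates $\log(1+M)\cdot\Pro(\cdot)$, so Markov's inequality followed by monotone convergence in $n$ produces the claimed bound. The main obstacle is identifying the right pair of Jensen inequalities to match $JI$ against $\int vh(v)\,dx$; this is precisely where the technical convexity hypothesis \eqref{eq:convexity-case-a} enters, and without it there is no obvious way to kill the $h$-drift using only the lower bound on $\sigma^2$.
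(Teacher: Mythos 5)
Your proposal is correct and follows essentially the same route as the paper: It\^o's formula for $\log(1+I_{n,\e})$, the bound $2\pi\e^{-\alpha}$ for the positivity term, absorption of the $h$-drift into the It\^o correction via the product inequality $\bigl(\int v\bigr)\bigl(\int h(v)\bigr)\leq 2\pi\int vh(v)$ obtained from two Jensen inequalities resting on \eqref{eq:convexity-case-a}, and a maximal-inequality conclusion uniform in $n$. The only differences are cosmetic: you apply Jensen to $h$ and to $u\mapsto uh^{-1}(u)$ directly rather than to the inverses of $uh(u)$ and $uh^{-1}(u)$ as in Lemma \ref{lem:Jensen}, and you close with optional stopping plus Markov at $\tau^1_M$ where the paper invokes Doob's submartingale inequality.
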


First we prove some preliminary results.
Let $h$ be from Assumption \ref{assum}(a) and define $f(u) = uh(u)$ and $g(u) = uh^{-1}(u)$. The next lemma proves that both $f$ and $g$ are positive, increasing, and convex. 

\begin{lemma}\label{lem:convexity-f-g}
	Assumption \ref{assum}(a) implies that both $f(u)  = uh(u)$ and $g(u)=uh^{-1}(u)$ are convex. 
\end{lemma}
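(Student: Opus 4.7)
The plan is to verify convexity of each function by a direct second-derivative computation, using the hypotheses that $h$ is positive, increasing, convex, and satisfies the structural bound $h(u)h''(u)/(h'(u))^2 \leq 2$ from \eqref{eq:convexity-case-a} (treating $h$ as twice differentiable, as is implicit in the statement of that condition; the general case would follow by a standard mollification argument).

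First I would handle $f(u) = uh(u)$, which is the easy direction. Differentiating twice gives
\[
f''(u) = 2h'(u) + uh''(u),
\]
and both terms are nonnegative for $u \geq 0$ since $h$ is increasing ($h' \geq 0$) and convex ($h'' \geq 0$). Thus $f$ is convex on $[0,\infty)$.

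The substantive content is the convexity of $g(u) = uh^{-1}(u)$, and this is precisely where \eqref{eq:convexity-case-a} enters. Writing $\phi = h^{-1}$ and differentiating the identity $h(\phi(u)) = u$ yields
\[
\phi'(u) = \frac{1}{h'(\phi(u))}, \qquad \phi''(u) = -\frac{h''(\phi(u))}{(h'(\phi(u)))^3}.
\]
Then
\[
g''(u) = 2\phi'(u) + u\phi''(u) = \frac{2}{h'(\phi(u))} - \frac{u\, h''(\phi(u))}{(h'(\phi(u)))^3}.
\]
Multiplying through by the positive quantity $(h'(\phi(u)))^3$, nonnegativity of $g''$ is equivalent to
\[
2(h'(\phi(u)))^2 \geq u\, h''(\phi(u)).
\]
Substituting $u = h(\phi(u))$ and writing $v = \phi(u)$, this is exactly
\[
\frac{h(v)h''(v)}{(h'(v))^2} \leq 2,
\]
which is hypothesis \eqref{eq:convexity-case-a}. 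Hence $g'' \geq 0$ and $g$ is convex.

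The only potential obstacle is a technical one: $h$ is only assumed convex, so $h''$ need not exist everywhere. The cleanest fix is to remark that convex functions are twice differentiable almost everywhere and that the two convexity claims can be stated equivalently in terms of second-order difference quotients, so the pointwise computation above is enough; alternatively one can approximate $h$ by smooth convex functions satisfying \eqref{eq:convexity-case-a} and pass to the limit. Neither issue changes the structure of the argument, and the whole proof comes down to the two derivative identities above together with a rearrangement of \eqref{eq:convexity-case-a}.
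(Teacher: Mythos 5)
Your proposal is correct and follows essentially the same route as the paper: the same direct computation $f''(u)=2h'(u)+uh''(u)\geq 0$ for $f$, and for $g$ the same second-derivative formula in terms of $h^{-1}$, reduced via the substitution $u=h(v)$ to exactly the hypothesis \eqref{eq:convexity-case-a}. The only addition is your (reasonable) remark on the differentiability of $h$, which the paper leaves implicit since \eqref{eq:convexity-case-a} already presupposes $h''$.
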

\begin{proof}
	The convexity of $f$ is straightforward because  $f''(u) = 2h'(u) + uh''(u)\geq 0$ because $h$ is increasing and convex.
	
	To see the convexity of $g$, calculate
	\[g''(u) = \frac{2}{h'(h^{-1}(u))} - \frac{u h''(h^{-1}(u))}{(h'(h^{-1}(u)))^3}.\]
	Then
	\[g''(h(u)) = \frac{1}{h'(u)} \left(2 - \frac{h(u)h''(u)}{(h'(u))^2} \right) .\]
	This expression is nonnegative by assumption \eqref{eq:convexity-case-a}. Therefore $g$ is convex.
\end{proof}

\begin{lemma}
	The functions $f$ and $g$ have the property that $f^{-1}(u)g^{-1}(u) = u$ for all $u>h(0)$.
\end{lemma}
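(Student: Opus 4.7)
The plan is to identify $g^{-1}$ explicitly in terms of $f^{-1}$ and $h$. Specifically, I claim that
\begin{equation*}
    g^{-1}(u) = h(f^{-1}(u)) \quad \text{for every } u > h(0),
\end{equation*}
from which the desired identity is immediate, since multiplying both sides by $f^{-1}(u)$ gives $f^{-1}(u)\,g^{-1}(u) = f^{-1}(u)\, h(f^{-1}(u)) = f(f^{-1}(u)) = u$.

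To verify the claim, I fix $u > h(0)$ and set $a := f^{-1}(u)$, so that by definition $a\,h(a) = u$. The heart of the proof is the one-line computation
\begin{equation*}
    g(h(a)) = h(a) \cdot h^{-1}(h(a)) = h(a) \cdot a = a\,h(a) = u,
\end{equation*}
which, once $g$ is known to be invertible on a neighborhood of $u$, forces $g^{-1}(u) = h(a) = h(f^{-1}(u))$. This is essentially the observation that the mappings $f(a) = a\,h(a)$ and $g(b) = b\,h^{-1}(b)$ are conjugate via the involution $a \leftrightarrow h(a)$.

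The only nontrivial work is bookkeeping the domains. I need $a > 0$ so that $h^{-1}$ may legitimately be applied to $h(a)$, and this follows because $u > h(0) \geq 0 = f(0)$ together with the strict monotonicity of $f$ on $[0,\infty)$ (guaranteed by Lemma \ref{lem:convexity-f-g} and the fact that $f(0) = 0$). I also need $g$ to be a strictly increasing bijection $[h(0), \infty) \to [0, \infty)$, which again follows from Lemma \ref{lem:convexity-f-g} together with $g(h(0)) = h(0) \cdot 0 = 0$ and $g(u) \to \infty$ as $u \to \infty$. I do not anticipate any genuine obstacle; the lemma is a short algebraic identity, and its only real use of the hypotheses on $h$ is the strict monotonicity that makes $h^{-1}$ well-defined on $[h(0),\infty)$.
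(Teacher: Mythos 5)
Your proof is correct and rests on the same key observation as the paper's: the conjugacy identity relating $f$ and $g$ through $h$ (you write it as $g(h(a)) = a\,h(a) = f(a)$, the paper writes the mirror form $f(h^{-1}(u)) = h^{-1}(u)\cdot u = g(u)$), followed by inversion. Your version is a bit more careful about domains and invertibility, which the paper glosses over, but it is essentially the same argument.
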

\begin{proof}
	By the definitions,
	$h^{-1}(u) = \frac{g(u)}{u}$. Therefore,
	\[f\left(\frac{g(u)}{u}\right) = \left(\frac{g(u)}{u}\right)u = g(u).\]
	Therefore,
	\[\frac{g(u)}{u} = f^{-1}(g(u)).\]
	Substituting $v=g(u)$, this proves
	\[v = f^{-1}(v)g^{-1}(v).\]
\end{proof}

The next lemma is a consequence of Jensen's inequality.
\begin{lemma} \label{lem:Jensen}
	For any nonnegative function $v :[-\pi,\pi] \to [0,+\infty)$, for which $\int v(x)h(v(x))dx<+\infty$, 
	\begin{equation} \label{eq:Jensen}
		\left(\int_{-\pi}^\pi h(v(x))dx \right)\leq   \frac{2\pi\int_{-\pi}^\pi (v(x) + \frac{1}{2\pi})h(v(x))dx}{1+ \int_{-\pi}^\pi v(x)dx} .
	\end{equation}
\end{lemma}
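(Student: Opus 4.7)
The plan is to rewrite the inequality in an equivalent, cleaner form and then prove it by two applications of Jensen's inequality --- one invoking the convex function $g(u) = u h^{-1}(u)$ supplied by Lemma \ref{lem:convexity-f-g}, and one invoking the convexity of $h$ itself.

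First I would set $I = \int v(x)\,dx$ and $W = \int h(v(x))\,dx$. Multiplying \eqref{eq:Jensen} through by $1+I$ and cancelling the copy of $W$ that appears on both sides reduces the claim to the equivalent Chebyshev-type inequality
\[
I \cdot W \;\leq\; 2\pi \int v(x) h(v(x))\,dx.
\]
Substituting $w(x) = h(v(x))$ so that $v(x) = h^{-1}(w(x))$, I rewrite $\int v(x) h(v(x))\,dx = \int g(w(x))\,dx$, where $g$ is convex on $[h(0),\infty)$. Jensen's inequality for $g$ with respect to the uniform probability measure $\tfrac{1}{2\pi}\,dx$ on $D$ then yields
\[
\int v(x) h(v(x))\,dx \;\geq\; 2\pi\, g\!\left(\tfrac{W}{2\pi}\right) \;=\; W \cdot h^{-1}\!\left(\tfrac{W}{2\pi}\right).
\]

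Separately, a second application of Jensen to the convex function $h$ gives $\tfrac{W}{2\pi} \geq h(\tfrac{I}{2\pi})$, and since $h^{-1}$ is increasing this upgrades to $h^{-1}(\tfrac{W}{2\pi}) \geq \tfrac{I}{2\pi}$. Chaining the two estimates produces $\int v(x) h(v(x))\,dx \geq W \cdot \tfrac{I}{2\pi}$, which is exactly the reduced inequality after multiplying by $2\pi$.

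The main technical content of the lemma has really been absorbed into the convexity of $g$, which is where the convexity hypothesis \eqref{eq:convexity-case-a} enters via Lemma \ref{lem:convexity-f-g}; once that is in hand, the rest is routine. The only bookkeeping I would check is that both Jensen applications are valid: because $h:[0,\infty)\to[0,\infty)$ is increasing we have $w(x) \geq h(0)$, so $W/(2\pi) \geq h(0)$ lies in the domain of $h^{-1}$, and the integrability hypothesis $\int v h(v)\,dx < \infty$ rules out any degenerate situation.
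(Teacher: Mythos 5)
Your proof is correct and follows essentially the same route as the paper: both reduce the claim to the product inequality $\left(\int v\right)\left(\int h(v)\right) \leq 2\pi\int v\,h(v)\,dx$ and obtain it from Jensen's inequality applied to the convex function $g(u)=u h^{-1}(u)$ supplied by Lemma \ref{lem:convexity-f-g}. The only difference is cosmetic: where the paper pairs this with Jensen for $f(u)=uh(u)$ and the identity $f^{-1}(u)g^{-1}(u)=u$, you apply Jensen directly to the convex function $h$ to get $h^{-1}(W/2\pi)\geq I/2\pi$, which dispenses with the auxiliary identity lemma but rests on the same convexity input.
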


\begin{proof}
	By Lemma \ref{lem:convexity-f-g}, $f(u)=uh(u)$ and $g(u)=uh^{-1}(u)$ are both convex functions and they satisfy $f(v) = vh(v)$ and $g(h(v))=vh(v)$. By Jensen's inequality
	\begin{align}
		&\frac{1}{2\pi}\int_{-\pi}^\pi v(x)dx \leq f^{-1} \left(\frac{1}{2\pi} \int_{-\pi}^\pi v(x)h(v(x))dx\right) \\ 
		&\frac{1}{2\pi}\int_{-\pi}^\pi h(v(x))dx \leq g^{-1} \left(\frac{1}{2\pi} \int_{-\pi}^\pi v(x)h(v(x))dx\right). \label{eq:hv-bound}
	\end{align}
	Multiply these two equations and use the fact that $f^{-1}(u)g^{-1}(u)=u$ to see that
	\begin{equation} \label{eq:product-of-integrals}
		\left(\int_{-\pi}^\pi v(x)dx \right)\left(\int_{-\pi}^\pi h(v(x))dx\right) \leq 2\pi \int_{-\pi}^\pi v(x)h(v(x))dx.
	\end{equation}
	Add $\int h(v(x))dx$ to both sides
	\begin{equation} \label{eq:product-of-integrals}
		\left(1 + \int_{-\pi}^\pi v(x)dx \right)\left(\int_{-\pi}^\pi h(v(x))dx\right) \leq 2\pi \int_{-\pi}^\pi\left(\frac{1}{2\pi}+ v(x)\right)h(v(x))dx.
	\end{equation}
	This proves the claim.
\end{proof}

Now we can prove Lemma \ref{lem:L1-finite-assum-a}.
\begin{proof}[Proof of Lemma \ref{lem:L1-finite-assum-a}]
	Let $I_{n,\e}(t)$ be the semimartingale solving \eqref{eq:I-semimartingale}. By Ito formula
	\begin{align*}
		\log\left(1 + I_{n,\e}(t) \right) = &\log\left(1 + I_{n,\e}(0)\right)
		+\int_0^{t \wedge \tau^\infty_n \wedge \tau^{\inf}_\e} \int \frac{b(v(s,y))}{1 + I_{n,\e}(s)}dyds \\
		&+\int_0^{t \wedge \tau^\infty_n \wedge \tau^{\inf}_\e} \int \frac{(v(s,y))^{-\alpha}}{1 + I_{n,\e}(s)}dyds\\
		&-\frac{1}{2}\int_0^{t \wedge \tau^\infty_n \wedge \tau^{\inf}_\e} \int \frac{\sigma^2(v(s,y))}{(1 + I_{n,\e}(s))^2}dyds\\
		&+N(t) \nonumber\\
		&=:\log(1 + I_{n,\e}(0)) + B_{n,\e}(t) + A_{n,\e}(t) - S_{n,\e}(t) + N(t)
	\end{align*}
	where $N(t)$ is a martingale.
	
	Because of the definition of the $\tau^{\inf}_\e$ stopping time, and the fact that $I_{n,\e}(t)\geq 0$,
	\begin{equation}
		A_{n,\e}(t)\leq 2\pi t\e^{-\alpha}.
	\end{equation}
	
	By Assumption \ref{assum}(a) and Lemma \ref{lem:Jensen}
	\begin{align}
		&B_{n,\e}(t) = \int_0^{t \wedge \tau^\infty_n \wedge \tau^{\inf}_\e}\int \frac{b(v(s,y))}{1 + I_{n,\e}(s)}dy ds \\
		&\leq \int_0^{t \wedge \tau^\infty_n \wedge \tau^{\inf}_\e}\int \frac{h(v(s,y)) + \theta(1 + v(s,y))}{(1 + I_{n,\e}(s))}dyds\\
		& \leq  2\pi \int_0^{t \wedge \tau^\infty_n \wedge \tau^{\inf}_\e}\int \frac{(\frac{1}{2\pi}+v(s,y))h(v(s,y))}{(1 + I_{n,\e}(s))^2}dyds + Ct\\
		&\leq  \frac{1}{2}\int_0^{t \wedge \tau^\infty_n \wedge \tau^{\inf}_\e}\int \frac{\sigma^2(v(s,y))}{(1 + I_{n,\e}(s))^2}dyds + Ct.  
	\end{align}
	In the last line, we used the assumption \eqref{eq:sigma-upper-lower-case-a} that $(\frac{1}{2\pi}+u)h(u)\leq \frac{1}{4\pi}\sigma^2(u) $.
	Therefore, 
	\begin{equation}
		B_{n,\e}(t) - S_{n,\e}(t) \leq Ct
	\end{equation}
	
	These estimates imply that
	\begin{equation}
		\log(1 + I_{n,\e}(t)) \leq \log(1 + I_{n,\e}(0) ) + (C+2\pi\e^{-\alpha})t + N(t).
	\end{equation}
	By Doob's submartingale inequality, for any $T>0$, $M>0$,
	\begin{align}
		&\Pro\left(\sup_{t \in [0,T]} I_{n,\e}(t) >M\right)
		=\Pro\left(\sup_{t \in [0,T]} \log(1+I_{n,\e}(t)) >\log(1 + M)\right)\nonumber\\
		&
		\leq \frac{\log(1 + |u(0)|_{L^1}) +(C + 2\pi\e^{-\alpha})T}{\log(1+M)}.
	\end{align}
	Where $C$ does not depend on $n$, $\e$, $T$, or $|u(0)|_{L^1}$. Importantly, the right-hand side is independent of $n$, leading to our claim.
\end{proof}

\subsection{$L^1$ norm stays finite -- Assumption \ref{assum}(b)} \label{SS:subsec-Osgood-case}
In this section, we are assuming Assumption \ref{assum} holds. 
We define a new function
\begin{equation} \label{eq:g-def}
	g(u) := e^{-u} (h(e^u)-h(0)),
\end{equation} 
which, without confusion, is completely different than the $g$ defined in the previous subsection.
In this way, we can write 
\begin{equation} \label{eq:h-in-terms-of-g}
	h(u)=h(0)+ug(\log(u)).
\end{equation} 
Furthermore, because $h$ is positive, increasing, and convex, we know that $g(u)$ is increasing. By assumption \eqref{eq:h-osgood-case-b}, it is clear that
\begin{equation} \label{eq:g-osgood}
	\int_1^\infty \frac{1}{g(u)}du = \int_1^\infty \frac{e^u}{h(e^u) -h(0)}du
	=\int_e^\infty \frac{1}{h(u)-h(0)} = + \infty.
\end{equation}
Assumption \eqref{eq:h-growth-case-b} and \eqref{eq:h-in-terms-of-g} imply that
\begin{equation} \label{eq:g-growth}
	\limsup_{u \to +\infty} \frac{g(\log(u^2))}{ u^{2\gamma -1}} =0.
\end{equation}

\begin{lemma} \label{lem:int-b-bound}
	Let $I_{n,\e}(t)$ denote the solution to \eqref{eq:I-semimartingale}. There exists $C>0$ such that for any  $s \in [0,\tau^\infty_n \wedge \tau^{\inf}_\e]$,
	\begin{align}
		\frac{\int b(v(s,x))dx}{(1 + I_{n,\e}(s))}  \leq C+  &g\left(\frac{2}{2\gamma -1 }\log(1 + I_{n,\e}(s))\right)  \nonumber\\
		&+   \int \frac{v(s,x)}{(1 + I_{n,\e}(s))} g \left(2\log \left(\frac{v(s,x)}{(1 + I_{n,\e}(s))^{\frac{1}{2\gamma-1}}}\right)\right)dx.
	\end{align}
	where $\gamma$ is from Assumption \ref{assum}(b).
\end{lemma}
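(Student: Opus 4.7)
The plan is to exploit the representation $h(u)=h(0)+ug(\log u)$ from \eqref{eq:h-in-terms-of-g} together with the pointwise bound $|b(u)|\leq h(|u|)$ from \eqref{eq:b-upper-case-b}. On the interval $[0,\tau^\infty_n\wedge\tau^{\inf}_\e]$ the positivity $v(s,x)\geq \e >0$ keeps $\log v(s,x)$ well-defined, so I would first write
\begin{equation*}
\frac{\int b(v(s,x))\,dx}{1+I_{n,\e}(s)} \leq \frac{\int h(v(s,x))\,dx}{1+I_{n,\e}(s)} = \frac{2\pi h(0)}{1+I_{n,\e}(s)} + \frac{\int v(s,x)\, g(\log v(s,x))\,dx}{1+I_{n,\e}(s)}.
\end{equation*}
The first summand on the right is bounded by $2\pi h(0)$ and absorbs into the constant $C$ in the conclusion.

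The key step is then to split $\log v(s,x) = A(s,x)+B(s)$, where
\begin{equation*}
A(s,x) := \log\!\left(\frac{v(s,x)}{(1+I_{n,\e}(s))^{1/(2\gamma-1)}}\right),\qquad B(s) := \frac{1}{2\gamma-1}\log(1+I_{n,\e}(s)),
\end{equation*}
so that the doubled quantities $2A$ and $2B$ exactly match the arguments of $g$ appearing on the right-hand side of the claim. I would record two elementary properties of $g$ from \eqref{eq:g-def}: first, $g\geq 0$ on all of $\R$ because $h$ is increasing on $[0,\infty)$ and $e^u>0$; second, $g$ is increasing, which reduces after computing $g'$ to the convexity consequence $wh'(w)\geq h(w)-h(0)$. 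Using these two facts, a one-line case split on whether $A\leq B$ or $A>B$ yields
\begin{equation*}
g(A+B)\leq g(2\max(A,B)) \leq g(2A)+g(2B).
\end{equation*}

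The remainder is just bookkeeping. Substituting this inequality, pulling the $x$-independent factor $g(2B(s))$ out of the spatial integral, and bounding $\int v(s,x)\,dx = I_{n,\e}(s)\leq 1+I_{n,\e}(s)$ gives
\begin{equation*}
\frac{\int v(s,x)\, g(\log v(s,x))\,dx}{1+I_{n,\e}(s)} \leq g(2B(s)) + \frac{\int v(s,x)\, g(2A(s,x))\,dx}{1+I_{n,\e}(s)},
\end{equation*}
which are exactly the last two terms in the conclusion. The only conceptually non-routine step is the inequality $g(A+B)\leq g(2A)+g(2B)$, which is precisely the reason the factor of $2$ appears inside $g$ in the statement; the rest of the argument is algebraic rearrangement once the representation \eqref{eq:h-in-terms-of-g} is in hand.
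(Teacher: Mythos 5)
Your proposal is correct and follows essentially the same route as the paper's proof: bound $b$ by $h$, rewrite $h$ via \eqref{eq:h-in-terms-of-g}, split the logarithm into the two terms matching the arguments of $g$ in the claim, and use that $g$ is nonnegative and increasing to get $g(A+B)\leq g(2A)+g(2B)$ before integrating against $v$ and dividing by $1+I_{n,\e}(s)$. Your added justification that $g$ is increasing (via $wh'(w)\geq h(w)-h(0)$ from convexity of $h$) is a correct elaboration of a fact the paper states without proof just before the lemma.
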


\begin{proof}
	By Asumption \eqref{eq:b-upper-case-b} and the definition of $g$ \eqref{eq:g-def},
	$$b(v(s,x)) \leq h(0) + v(s,x) g( \log(v(s,x))).$$ 
	By the properties of logarithms,
	\[\log(v(s,x)) \leq \log \left(\frac{v(s,x)}{(1 + I_{n,\e}(s))^{\frac{1}{2\gamma -1}}}\right) + \frac{1}{2\gamma -1} \log \left(1 + I_{n,\e}(s)\right).\]
	Because $g$ is positive and increasing, for any $a,b \in \mathbb{R}$, $g(a+b) \leq \max\{g(2a), g(2b)\} \leq g(2a) + g(2b)$. Therefore,
	\[g(\log(v(s,x))) \leq  g \left(2\log \left(\frac{v(s,x)}{(1 + I_{n,\e}(s))^{\frac{1}{2\gamma -1}}}\right)\right) +  g \left(\frac{2}{2\gamma -1 }\log \left(1 + I_{n,\e}(s)\right)\right).\]
	
	Now we multiply this estimate by $v(t,x)$ and integrate in space and use the definition that $I_{n,\e}(s) = \int v(s,x)dx$ to observe that
	\begin{align}
		\int v(s,x)g(\log(v(s,x)))dx \leq 
		&\int v(s,x) g \left(2\log \left(\frac{v(s,x)}{(1 + I_{n,\e}(s))^{\frac{1}{2\gamma-1}}}\right)\right)dx \nonumber\\
		& + I_{n,\e}(s) g \left(\frac{2}{2\gamma -1}\log \left(1 + I_{n,\e}(s)\right)\right).
	\end{align}
	The result follows because Assumption \eqref{eq:b-upper-case-b} assumes that $$b(v(s,x)) \leq h(0) + v(s,x)g(v(s,x)).$$
\end{proof}

\begin{lemma}\label{lem:L1-finite-assum-b}
	 Assume Assumption \ref{assum}(b). Let 
	\[G(x) = \int_1^x \frac{1}{g\left(\frac{2}{2\gamma -1 }u\right)}du\]
	and notice that by \eqref{eq:g-osgood}, $\lim_{x \to \infty} G(x) = \infty$.
	There exists $C>0$  such that for any $T>0$, $n>0$, and $M>0$, 
	\begin{align}
		&\Pro \left( \sup_{t \in [0,T\wedge \tau^\infty_\infty \wedge \tau^{\inf}_\e)} \int u(t,x)dx > M \right) \nonumber\\
		&\leq \frac{G(\log(1 + I_{n,\e}(0))) + (C + 2\pi \e^{-\alpha})T}{G(\log(1 + M))}.
	\end{align}
\end{lemma}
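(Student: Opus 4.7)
The plan is to apply Ito's formula to $\phi(I_{n,\e}(t)):=G(\log(1+I_{n,\e}(t)))$, mimicking the role played by $\log(1+\cdot)$ in Lemma \ref{lem:L1-finite-assum-a}, but with the concave function $G$ chosen precisely to tame the Osgood-type growth of $b$ through $g$. A direct computation gives
\[\phi'(x)=\frac{1}{(1+x)\,g\!\bigl(\tfrac{2}{2\gamma-1}\log(1+x)\bigr)},\qquad \phi''(x)\leq-\frac{1}{(1+x)^2\,g\!\bigl(\tfrac{2}{2\gamma-1}\log(1+x)\bigr)},\]
where I have discarded a nonnegative $g'$ term in $\phi''$. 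The $v^{-\alpha}$ contribution to the Ito drift is at most $2\pi\e^{-\alpha}\phi'(I_{n,\e})$ and is uniformly bounded because $g$ is positive and increasing (if $g(0)=0$, one shifts the base point in the definition of $G$ so that $g$ is bounded away from zero on the relevant range). The bulk of the work is to show that the remaining drift, coming from $b$ against $\sigma^2$, is bounded by a deterministic constant.

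To that end, I would feed Lemma \ref{lem:int-b-bound} into $\phi'(I_{n,\e})\int b(v(s,y))dy$ and divide by $g\!\bigl(\tfrac{2}{2\gamma-1}\log(1+I_{n,\e})\bigr)$. Two of the three resulting summands are absorbed into a constant, leaving the delicate term
\[\int\frac{v(s,y)\,g\!\left(2\log\dfrac{v(s,y)}{(1+I_{n,\e}(s))^{1/(2\gamma-1)}}\right)}{(1+I_{n,\e}(s))\,g\!\bigl(\tfrac{2}{2\gamma-1}\log(1+I_{n,\e}(s))\bigr)}\,dy.\]
The lower bound $\sigma^2(u)\geq c|u|^{2\gamma}$ from \eqref{eq:sigma-upper-lower-case-b} combined with $\phi''$ produces a negative Ito drift of size at least $\tfrac{c}{2}\int v^{2\gamma}/[(1+I_{n,\e})^2\,g(\cdots)]\,dy$, and the principal obstacle is absorbing the delicate term into this negative contribution. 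To do so, given $\delta>0$ the growth hypothesis \eqref{eq:g-growth} supplies some $w^{\star}(\delta)>0$ with $g(2\log w)\leq \delta\,w^{2\gamma-1}$ for all $w\geq w^{\star}$. Setting $w:=v(s,y)/(1+I_{n,\e}(s))^{1/(2\gamma-1)}$, split the spatial integral: on $\{y:w\geq w^{\star}\}$ the integrand is bounded by $\delta\,v^{2\gamma}/[(1+I_{n,\e})^2 g(\cdots)]$ and, upon taking $\delta=c/4$, is absorbed into half of the $\sigma^2$ contribution; on $\{y:w<w^{\star}\}$ the integrand is at most $g(2\log w^{\star})\,v/[(1+I_{n,\e})g(\cdots)]$, whose spatial integral is dominated by $g(2\log w^{\star})/g(\cdots)$, a quantity uniformly bounded above. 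Consequently the total drift of $\phi(I_{n,\e}(t))$ is pointwise bounded by a deterministic constant of the form $C+2\pi\e^{-\alpha}$.

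With the drift bound in hand, the conclusion follows exactly as in Lemma \ref{lem:L1-finite-assum-a}. The process $\phi(I_{n,\e}(t))+(C+2\pi\e^{-\alpha})(T-t)$ is a nonnegative local supermartingale on $[0,T\wedge\tau^\infty_n\wedge\tau^{\inf}_\e]$ with initial value $G(\log(1+I_{n,\e}(0)))+(C+2\pi\e^{-\alpha})T$. Since $\phi$ is strictly increasing, the event $\{\sup_{t\in[0,T]}I_{n,\e}(t)>M\}$ equals $\{\sup_{t\in[0,T]}\phi(I_{n,\e}(t))>G(\log(1+M))\}$, and Doob's maximal inequality for nonnegative supermartingales delivers the stated bound. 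The estimate is uniform in $n$, so letting $n\to\infty$ and invoking the comparison $u\leq v$ from Proposition \ref{prop:comparison} completes the proof.
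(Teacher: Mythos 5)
Your proof is correct and follows essentially the same route as the paper: both hinge on Lemma \ref{lem:int-b-bound}, the cancellation of $g(2\log w)$ against a multiple of $w^{2\gamma-1}$ supplied by \eqref{eq:g-growth} and the lower bound in \eqref{eq:sigma-upper-lower-case-b}, and a Doob-type maximal inequality for $G(\log(1+I_{n,\e}))$. The only cosmetic difference is that you apply Ito's formula once to the composite $G(\log(1+\cdot))$ (discarding the extra negative $g'$ term in $\phi''$) and split the spatial integral at $w=w^\star$, whereas the paper applies Ito twice --- first to $\log(1+I_{n,\e})$, then to $G$, using $G''\leq 0$ --- and bounds $\sup_{u>0}\bigl(g(2\log u)-\tfrac{c^2}{2}u^{2\gamma-1}\bigr)$ globally; the two bookkeeping schemes are equivalent.
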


\begin{proof}
Apply Ito formula with the nonnegative function $ \log(1 + I_{n,\e}(t))$.
\begin{align} \label{eq:log-Ito}
	\log&(1 + I_{n,\e}(t)) \nonumber\\
	= &\log(1 + I_{n,\e}(0))\nonumber\\
	&+ \int_0^{t \wedge \tau^\infty_n \wedge \tau^{\inf}_\e} \frac{\int(v(s,y)dy)^{-\alpha}}{1 + I_{n,\e}(s)}ds
	+ \int_0^{t \wedge \tau^\infty_n \wedge \tau^{\inf}_\e} \frac{\int b(v(s,x))dx}{1 + I_{n,\e}(s)}ds\nonumber\\
	&- \int_0^{t \wedge \tau^\infty_n \wedge \tau^{\inf}_\e} \frac{\int (\sigma(v(s,x)))^2dx }{2(1+ I_{n,\e}(s))^2}ds +N(t)\nonumber\\
	&=: \log(1 + I_{n,\e}(0))+ A_{n,\e}(t) + B_{n,\e}(t) - S_{n,\e}(t) + N(t)
\end{align}
where $N(t)$ is a martingale.

Because of the stopping time $\tau^{\inf}_\e$,
\begin{equation}
	A_{n,\e}(t) \leq 2\pi \e^{-\alpha}t.
\end{equation}
Apply Lemma \ref{lem:int-b-bound} to see that
\begin{align}
	&B_{n,\e}(t):= \int_0^{t \wedge \tau^\infty_n \wedge \tau^{\inf}_\e} \frac{\int b(v(s,x))dx}{(1 + I_{n,\e}(s))}ds\nonumber\\
	&\leq C(t \wedge \tau^\infty_n \wedge \tau^{\inf}_\e)+   \int_0^{t \wedge \tau^\infty_n \wedge \tau^{\inf}_\e}  \int \frac{v(s,x)}{(1 + I_{n,\e}(s))} g \left(2\log \left(\frac{v(s,x)}{(1 + I_{n,\e}(s))^{\frac{1}{2\gamma-1}}}\right)\right)dxds \nonumber\\
	&\qquad+   \int_0^{t \wedge \tau^\infty_n \wedge \tau^{\inf}_\e} g \left(\frac{2}{2\gamma-1}\log\left(1 + I_{n,\e}(s)\right)\right)ds.
\end{align}
In the last line, we bounded $\frac{I_{n,\e}(s)}{1 +I_{n,\e}(s)} \leq 1.$

Use the assumption \eqref{eq:sigma-upper-lower-case-b} to lower  bound
\begin{align}
	&S_{n,\e}(t) := \int_0^{t \wedge \tau^\infty_n \wedge \tau^{\inf}_\e} \frac{\int (\sigma(v(s,x)))^2dx }{2(1 + I_{n,\e}(s))^2}ds \nonumber\\
	&\geq \int_0^{t \wedge \tau^\infty_n \wedge \tau^{\inf}_\e} \frac{\int c^2 (v(s,x))^{2\gamma}dx }{2(1 + I_{n,\e}(s))^2}ds \nonumber\\
	&\geq \frac{c^2}{2}\int_0^{t \wedge \tau^\infty_\infty \wedge \tau^{\inf}_\e} \int \left(\frac{v(s,x)}{1 + I_{n,\e}(s)}\right)\left(\frac{v(s,x)}{(1 + I_{n,\e}(s))^{\frac{1}{2\gamma-1}}} \right)^{2\gamma -1}dxds.
\end{align}

Therefore,
\begin{align}
	B_{n,\e}(t)& - S_{n,\e}(t) 
	\leq C(t \wedge \tau^\infty_n \wedge \tau^{\inf}_\e)   \nonumber\\
	&+\int_0^{t \wedge \tau^\infty_n \wedge \tau^{\inf}_\e} g \left(\frac{2}{2\gamma-1}\log\left(1 + I_{n,\e}(s)\right)\right)ds\nonumber\\
	&+ \int_0^{t \wedge \tau^\infty_n \wedge \tau^{\inf}_\e} \int \left( \frac{v(s,x)}{1 + I_{n,\e}(s)} \right) \Bigg(g \left(2\log \left(\frac{v(s,x)}{(1 + I_{n,\e}(s))^{\frac{1}{2\gamma-1}}}\right)\right) \nonumber\\
	&\hspace{5cm}- \frac{c^2}{2}\left(\frac{v(s,x)}{(1 + I_{n,\e}(s))^{\frac{1}{2\gamma-1}}} \right)^{2\gamma -1}\Bigg)dyds.
\end{align}

Now we use the bound \eqref{eq:g-growth} to see that 
$$\sup_{u>0} (g(2\log(u)) - \frac{c^2}{2}u^{2\gamma-1})<+\infty,$$ 
with $\frac{v(s,x)}{(1 + I_{n,\e}(s))^{2\gamma-1}}$ replacing $u$, to conclude that there exist $C>0$  such that

\begin{align}
	B_{n,\e}(t) - S_{n,\e}(t) \leq &C({t \wedge \tau^\infty_n \wedge \tau^{\inf}_\e}) \nonumber\\
	&+  \int_0^{t \wedge \tau^\infty_n \wedge \tau^{\inf}_\e} g \left(\frac{2}{2\gamma -1}\log\left(1 + I_{n,\e}(s)\right)\right)ds \nonumber\\
	&+C \int_0^{t \wedge \tau^\infty_n \wedge \tau^{\inf}_\e} \int \left(\frac{v(s,x)}{1 + I_{n,\e}(s)} \right)dx.
\end{align}

We can simplify this expression because $\int \frac{v(t,x)}{(1 + I_{n,\e}(s))}dx \leq 1.$

Therefore, 
\begin{align}
	\log(1 + I_{n,\e}(t)) = &\log(1 + I_{n,\e}(0)) + (C + 2\pi \e^{-\alpha})t\nonumber\\
	&+ C \int_0^{t \wedge \tau^\infty_n \wedge \tau^{\inf}_\e} g\left(\frac{2}{2\gamma-1}\log(1 + I_{n,\e}(s))\right)ds +N(t).
\end{align}

Applying Ito formula with $G(x) = \int_1^x \frac{1}{g(\frac{2}{2\gamma-1}u)}du$, and noticing that $G''(u)\leq0$ because $\frac{1}{g(u)}$ is nonincreasing,
\begin{align}
	G(\log(1 + I_{n,\e}(t))) \leq G(\log(1 + I_{n,\e}(0))) +(C + 2\pi \e^{-\alpha})t + \tilde{N}(t),
\end{align}
where $\tilde{N}(t)$ is a martingale.
$G$ is nonnegative and increasing, so Doob's submartingale inequality implies that for any $M>0$,
\begin{align}
	&\Pro \left(\sup_{t \in [0,T ]} I_{n,\e}(t) > M \right)\nonumber\\
	&=\Pro \left(\sup_{t \in [0, T ]} G(\log(1 + I_{n,\e}(t))) > G(\log(1 +M)) \right)\nonumber\\
	&\leq \frac{\E G(\log(1 + I_{n,\e}(T)))}{G(\log(1 + M))}\nonumber\\
	&\leq \frac{G(\log(1 + |u(0)|_{L^1})) + (C+ 2\pi\e^{-\alpha})T}{G(\log(1 + M))}
\end{align}

The above bound is independent of $n$, proving the claim.
\end{proof}

\section{$L^\infty$ norm} \label{S:Linfinity}

Let $\tau^{\inf}_\e$ and $\tau^1_M$ be defined as in \eqref{eq:tau-inf}--\eqref{eq:tau-L1}.
We now define a sequence of stopping times that keep track of when the $L^\infty$ norm triples or falls by one-third. The solution will explode if and only if the $L^\infty$ norm triples an infinite number of times, and we can prove that this infinite tripling cannot happen before the time $\tau^1_M\wedge \tau^{\inf}_\e$.

Given any fixed $M>0$ and $\e>0$, define
\begin{equation}
	\rho_0 = \inf\{t\in [0,\tau^1_M \wedge \tau^{\inf}_\e]: |u(t)|_{L^\infty} = 3^m \text{ for some } m \in \mathbb{N} \},
\end{equation}
If $|u(\rho_n)|_{L^\infty} = 3^0$, then
\begin{equation}
	\rho_{n+1}= \inf\{t \in [\rho_n, \tau^1_M\wedge \tau^{\inf}_\e]: |u(t)|_{L^\infty} = 3^{1} \}
\end{equation}
and if $|u(\rho_n)|_{L^\infty} = 3^m$ for $m\geq 1$, then
\begin{equation}
	\rho_{n+1}= \inf\{t \in [\rho_n, \tau^1_M\wedge \tau^{\inf}_\e]: |u(t)|_{L^\infty} = 3^{m+1} \text{ or } |u(t)|_{L^\infty} = 3^{m-1}\}
\end{equation}
For all of these stopping time definitions, we use the convention that $\rho_n = \tau^1_M \wedge \tau^{\inf}_\e$ if the $L^\infty$ norm does not triple or fall by one third after $\rho_{n-1}$.

We recall the following moment bound on the supremum norm of a stochastic convolution from \cite{s-2024-aop}

\begin{proposition}[Theorem 1.2 of \cite{s-2024-aop}] \label{prop:moment}
	Define the stochastic convolution for an adapted $\varphi: [0,T]\times [-\pi,\pi] \to \mathbb{R}$ by
	\begin{equation}
		Z^\varphi(t,x) = \int_0^t \int G(t-s,x-y) \varphi(s,y)W(dyds).
	\end{equation}
	For any $p>6$, there exists $C_p>0$ such that for any $T \in (0,1)$ and any adapted $\varphi$,
	
	\begin{equation}
		\E\sup_{t \in [0,T]}\sup_{x \in D} |Z^\varphi(t,x)|^p \leq C_p T^{\frac{p}{4}-\frac{3}{2}}\E \int_0^T \int |\varphi(t,x)|^pdxdt.
	\end{equation}
\end{proposition}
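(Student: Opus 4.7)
The plan is to use the Da Prato-Kwapień-Zabczyk factorization method, which is the standard tool for sharp sup-norm estimates on stochastic convolutions. I would fix an exponent $\alpha$ in the interval $(\frac{3}{2p}, \frac{1}{4})$, which is non-empty precisely because $p > 6$. Exploiting the identity $\int_r^t (t-s)^{\alpha-1}(s-r)^{-\alpha} ds = \pi/\sin(\pi \alpha)$ together with the semigroup property of the periodic heat kernel, one rewrites
\begin{equation}
Z^\varphi(t,x) = \frac{\sin(\pi \alpha)}{\pi} \int_0^t (t-s)^{\alpha - 1} \int G(t-s, x-y) Y_\alpha(s,y) \, dy \, ds,
\end{equation}
where $Y_\alpha(s,y) := \int_0^s \int (s-r)^{-\alpha} G(s-r, y-z) \varphi(r,z) W(dz \, dr)$. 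This converts the original problem into estimating an iterated deterministic integral whose integrand is a single random field.

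I would then apply H\"older's inequality in $(s,y)$ with conjugate exponent $q = p/(p-1)$ to produce a pathwise bound $|Z^\varphi(t,x)|^p \leq C_{t,x} \int_0^T \int |Y_\alpha(s,y)|^p dy\, ds$ with $C_{t,x} = \bigl(\int_0^t \int (t-s)^{q(\alpha-1)} G^q(t-s,x-y) dy\, ds\bigr)^{p/q}$. The one-dimensional heat-kernel estimate $\int G^q(\tau, \cdot) dy \leq C\tau^{-(q-1)/2}$, together with the lower bound $\alpha > 3/(2p)$, gives $C_{t,x} \leq C T^{p\alpha - 3/2}$ uniformly in $(t, x)$. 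The key virtue of the factorization is that this deterministic prefactor is independent of $(t,x)$, so the supremum over the space-time cylinder can be pulled inside with no extra cost.

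The final step is to bound $\E \int_0^T \int |Y_\alpha(s,y)|^p dy \, ds$. By Burkholder-Davis-Gundy, $\E|Y_\alpha(s,y)|^p$ is dominated by the $p/2$-th moment of the quadratic variation $\int_0^s \int (s-r)^{-2\alpha} G^2(s-r,y-z)\varphi^2(r,z) dz\, dr$. The upper bound $\alpha < 1/4$ is precisely what makes the total mass $A(s,y) := \int_0^s (s-r)^{-2\alpha - 1/2} dr \leq C s^{1/2 - 2\alpha}$ finite, so Jensen's inequality applied against the normalized measure pulls the $p/2$ power inside the integral. Integrating in $y$ using $\int G^2(\tau,\cdot) dy \leq C\tau^{-1/2}$, swapping integrals by Fubini, and elementary power-counting yields $\E \int|Y_\alpha|^p dy\, ds \leq C T^{(1/2 - 2\alpha)(p/2)} \E \int |\varphi|^p dz\, dr$.

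Combining the two estimates, the $\alpha$-dependent exponents cancel to give the net bound $T^{p\alpha - 3/2 + (1/2 - 2\alpha)(p/2)} = T^{p/4 - 3/2}$, independent of the specific $\alpha$ chosen. The central obstacle is the compatibility of the two constraints $\alpha > 3/(2p)$ (for the H\"older step in Step 2) and $\alpha < 1/4$ (for the normalization $A(s,y)$ in Step 3): these are simultaneously satisfiable exactly when $p > 6$. This is where the hypothesis on $p$ enters, and it reflects the same critical $u^{3/2}$ threshold of Mueller that motivates the upper bounds on $\sigma$ throughout the paper.
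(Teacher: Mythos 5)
Your proposal is correct, and the exponent bookkeeping checks out: the H\"older step yields $C_{t,x}\le CT^{p\alpha-3/2}$ precisely when $\alpha>3/(2p)$, the BDG--Jensen step yields $CT^{(1/2-2\alpha)p/2}$ precisely when $\alpha<1/4$, the two powers of $T$ combine to $T^{p/4-3/2}$, and the window $(3/(2p),1/4)$ is nonempty exactly when $p>6$. The paper itself gives no proof of this proposition --- it is imported verbatim as Theorem 1.2 of \cite{s-2024-aop} --- and your Da Prato--Kwapie\'n--Zabczyk factorization argument is the standard route by which that cited result is established, so there is nothing to compare beyond noting that your derivation is a sound, self-contained reconstruction of it.
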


The next result is the fundamental lemma that allows us to prove non-explosion.

\begin{lemma} \label{lem:tripling-prob}
	There exists $m_0>0$ and $C>0$, depending on $\e$ and $M$, such that for any $m> m_0$ and  any $n \in \mathbb{N}$, the probability of tripling satisfies
	\begin{align}
		&\Pro \left(|v(\rho_{n+1})|_{L^\infty} =3|v(\rho_n)|_{L^\infty} \text{ and } |v(\rho_n)|_{L^\infty} \geq 3^{m_0} \right) \nonumber\\
		&\leq C \E\int_{\rho_n}^{\rho_{n+1}} \int |\sigma(v(s,y))|^2dyds.
	\end{align}
\end{lemma}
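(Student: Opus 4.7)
The plan is to follow the strategy of Lemma 4.2 in \cite{s-2024-aop}: choose a short time scale $T_m$, show that the tripling event forces the stochastic convolution $Z$ to become large on $[0, T_m \wedge (\rho_{n+1} - \rho_n)]$, and bound this via Chebyshev together with Proposition \ref{prop:moment}. Write $m = m_n$ so $|v(\rho_n)|_{L^\infty} = 3^m$. For $t \in [\rho_n, \rho_{n+1}]$ we have $|v(t)|_{L^\infty} \in [3^{m-1}, 3^{m+1}]$, $|v(t)|_{L^1} \leq M$, and $v(t, x) \geq \e$. Starting the mild formula from $\rho_n$, for $r \in [0, \rho_{n+1} - \rho_n]$ I would decompose
\[
v(\rho_n + r, x) = H_r(x) + D_r(x) + A_r(x) + Z_r(x),
\]
where $H_r(x) = \int G(r, x-y)v(\rho_n, y)dy$, $D_r$ and $A_r$ are the heat-kernel convolutions of $b(v)$ and $v^{-\alpha}$, and $Z_r$ is the stochastic convolution of $\sigma(v(s,y))\mathbbm{1}_{[\rho_n, \rho_{n+1}]}(s)$.

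Next I would pick $T_m = C_0 M^2 \cdot 9^{-m}$ with $C_0$ large, restricting to $m$ large enough that $T_m \leq 1$. Since $\int G(r, \cdot) = 1$ we have $|H_r|_{L^\infty} \leq 3^m$ for all $r$; the bound $\sup_y G(r, y) \leq C/\sqrt{r}$ together with $|v(\rho_n)|_{L^1} \leq M$ gives the sharper estimate $|H_{T_m}|_{L^\infty} \leq CM/\sqrt{T_m} \leq 3^m/4$ once $C_0$ is fixed. Both parts of Assumption \ref{assum} imply $|b(u)| \leq C(1 + u^2)$ for $|u| \leq 3^{m+1}$ (from \eqref{eq:b-upper-case-a}, \eqref{eq:sigma-upper-lower-case-a} in case (a), and from \eqref{eq:h-growth-case-b} with $\gamma < 1$ in case (b)), so $|D_r|_{L^\infty} \leq C \cdot 9^{m+1} \cdot T_m \leq K_1$, a constant independent of $m$. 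Likewise $|A_r|_{L^\infty} \leq \e^{-\alpha} T_m \leq K_2$.

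I would then split the tripling event on whether $\rho_{n+1} - \rho_n \leq T_m$. If yes, evaluating at $r = \rho_{n+1} - \rho_n$ gives $3^{m+1} = |v(\rho_{n+1})|_{L^\infty} \leq 3^m + K_1 + K_2 + |Z_r|_{L^\infty}$, forcing $|Z_r|_{L^\infty} \geq 2 \cdot 3^m - K_1 - K_2$. If no, then at $r = T_m$ the process has not yet fallen, so $|v(\rho_n + T_m)|_{L^\infty} > 3^{m-1}$; together with $|H_{T_m}|_{L^\infty} \leq 3^m/4$ this forces $|Z_{T_m}|_{L^\infty} \geq 3^{m-1} - 3^m/4 - K_1 - K_2 = 3^m/12 - K_1 - K_2$. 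Taking $m_0$ large enough to absorb the additive constants, for all $m \geq m_0$ the tripling event implies
\[
\sup_{r \in [0, T_m \wedge (\rho_{n+1} - \rho_n)]} |Z_r|_{L^\infty} \geq \frac{3^m}{13}.
\]

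Lastly, fix $p > 6$, apply Chebyshev, and invoke Proposition \ref{prop:moment} with $\varphi(s, y) = \sigma(v(s, y))\mathbbm{1}_{[\rho_n, \rho_{n+1}]}(s)$. Using $|v(s,y)| \leq 3^{m+1}$ on $[\rho_n, \rho_{n+1}]$ and $\sigma^2(u) \leq C(1 + |u|^3)$, I would estimate $|\sigma(v)|^p \leq C \cdot 3^{3m(p-2)/2} |\sigma(v)|^2$. Substituting $T_m = C_0 M^2 \cdot 9^{-m}$, the $m$-dependent prefactor reduces to
\[
3^{-mp} \cdot 3^{-2m(p/4 - 3/2)} \cdot 3^{3m(p-2)/2} = 3^{m(-p - p/2 + 3 + 3(p-2)/2)} = 1,
\]
yielding the claim. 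The main obstacle is the second subcase $\rho_{n+1} - \rho_n > T_m$, where the tripling is not yet realized at the auxiliary time $\rho_n + T_m$: the argument works there only because heat smoothing has reduced $|H_{T_m}|_{L^\infty}$ well below $3^{m-1}$ by that time, forcing $Z_{T_m}$ to absorb the excess and keep $|v|$ above $3^{m-1}$.
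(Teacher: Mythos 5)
Your proposal is correct and follows essentially the same route as the paper's proof: the same mild-solution decomposition started at $\rho_n$, the same time scale $T_m \sim M^2 9^{-m}$, the same $O(1)$ bounds on the drift and $v^{-\alpha}$ terms, reduction of the tripling event to $\sup|Z|\gtrsim 3^m$, and Chebyshev with Proposition \ref{prop:moment} plus factoring out $|\sigma|^{p-2}\leq C3^{3m(p-2)/2}$ so that the powers of $3^m$ cancel exactly. The only cosmetic difference is that you split on whether $\rho_{n+1}-\rho_n\leq T_m$, whereas the paper argues the contrapositive (if $|Z|$ stays below $3^{m-2}$ on $[0,T_m]$ the norm falls by a third before it can triple); these are logically equivalent.
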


\begin{proof}
	Starting at time $\rho_n$, for $t \in [0, \rho_{n+1}-\rho_n]$, the local mild solution satisfies
	\begin{equation}
		v(\rho_n+t,x) = \tilde{v}_n(t,x) 
	\end{equation}
	where 
	\begin{equation}
		\tilde{v}_n(t,x):=\int G(t,x-y)v(\rho_n,y)dy  + I^1_n(t,x) + I_n^2(t,x) +  Z_n(t,x)
	\end{equation}
	where
	\begin{equation}
		I_n^1(t,x) = \int_{\rho_n}^{(\rho_n + t) \wedge \rho_{n+1}} \int G(t-s,x-y)b(v(s,y))dyds, 
	\end{equation}
	\begin{equation}
		I_n^2(t,x) =  \int_{\rho_n}^{(\rho_n + t) \wedge \rho_{n+1}}\int G(t-s,x-y) (v(s,y))^{-\alpha}dyds,
	\end{equation}
	and $Z_n$ is the stochastic convolution
	\begin{equation}
		Z_n(t,x) = \int_{\rho_n}^{(\rho_n + t) \wedge \rho_{n+1}} \int G(t-s,x-y) \sigma(v(s,y))W(dyds).
	\end{equation}
	These definitions of $\tilde{v}_n(t,x)$, $I_n^1(t,x)$, $I_n^2(t,x)$, and $Z_n(t,x)$  are convenient because they well-defined for all $t>0$, and $x \in D$, while $v$ without stopping time could potentially explode. The definition satisfies $\tilde{v}_n(t,x) = v(\rho_n + t,x)$ for all $t \in [0, \rho_{n+1} - \rho_n]$. We will show that the $L^\infty$ norm of $\tilde{v}_n$ falls by a third before it triples with high probability. Because the original $v(\rho_n + t,x)$ matches until this tripling or falling by one-third occurs, this will prove that $v(\rho_{n+1}+t,x)$ also falls by one-third before tripling with high probability. 
	
	The heat kernel satisfies the property $|G(t,x)| \leq C t^{-\frac{1}{2}}$ for $t \in (0,1)$. Let $T_m = C^{-2}M^{-2}3^{-2(m-2)}$. Then, conditioning on $|u(\rho_n)|_{L^\infty}=3^m$, and using the fact that $\rho_n \leq \tau^1_M$ by definition, the linear term satisfies
	\begin{equation}
		\int G(T_m, x-y) v(\rho_n,y)dy \leq CMT_m^{-\frac{1}{2}} \leq 3^{m-2}.
	\end{equation}
	On the other hand, because $|G(T_m,\cdot)|_{L^1}=1$,
	for any $t >0$,
	\begin{equation}
		\int G(t,x-y)v(\rho_n,y)dy \leq 3^m.
	\end{equation}
	Because of the definition that $\rho_{n+1}\leq \tau^{\inf}_\e$,
	\begin{equation}
		\sup_{t \leq T_m} \sup_{x\in D} I_n^2(t,x) \leq \e^{-\alpha}T_m \leq C \e^{-\alpha}3^{-2m}.
	\end{equation}
	Conditioning on $|u(\rho_n)|_{L^\infty} =3^m$, and applying Assumption \ref{assum},
	\begin{equation}
		\sup_{t \leq T_m} \sup_{x \in D} I_n^1(t,x) \leq T_m (\theta(1 + 3^{m+1})+h(3^{m+1})).
	\end{equation}
	Under either Assumption \ref{assum}(a) or (b), the growth rate of $h(u)\leq Cu^2$ for some $C$. Therefore,
	\begin{equation}
		\sup_{t \leq T_m} \sup_{x \in D}I_n^1(t,x) \leq C3^{-2m}3^{2m} \leq C.
	\end{equation}
	Therefore, we can choose $m_0$, depending on $\e$, large enough so that for all $m\geq m_0$,
	\begin{equation}
		I_n^1(t,x) + I_n^2(t,x) \leq C \e^{-\alpha }3^{-2m}  +C \leq 3^{m-2}.
	\end{equation}
	
	These estimates lead to the estimates that for $t \leq T_m$
	\begin{align}
		&|\tilde{v}_n(t)|_{L^\infty} \leq 3^m + 3^{m-2} + \sup_{t \leq T_m}\sup_{x \in D} |Z_n(t,x)|
	\end{align}
	and the endpoint estimate
	\begin{equation}
		|\tilde{v}_n(T_m)|_{L^\infty} \leq 3^{m-2} + 3^{m-2} + \sup_{t \leq T_m}\sup_{x \in D} |Z_n(t,x)|.
	\end{equation}
	
	If $|Z_n(t,x)|_{L^\infty}\leq 3^{m-2}$ for all $t \leq T_m$ and $x \in D$, then $|\tilde{v}_n(t)|_{L^\infty} \leq 3^{m} + 3^{m-2} + 3^{m-2}< 3^{m+1}$ and $|\tilde{v}_n(T_m)|_{L^\infty} \leq 3^{m-2} + 3^{m-2} + 3^{m-1} \leq 3^{m-1}$. In this case, therefore, $|\tilde{v}_n(t,x)|_{L^\infty} = |v(\rho_n+t,x)|_{L^\infty}$ will fall by a third before it can triple. This means if $|v(\rho_{n+1})|_{L^\infty}$ is going to reach the level $3^{m+1}$ before falling to $3^{m-1}$, then 
	\begin{align}
		&\Pro \left(|v(\rho_{n+1})|_{L^\infty} = 3^{m+1} \Big| |v(\rho_n)|_{L^\infty} = 3^m\right) \nonumber\\
		&\leq 
		\Pro\left(\sup_{t \leq T_m} \sup_{x \in D} |Z_n(t,x)|>3^{m-2}\Big| |v(\rho_n)|_{L^\infty} = 3^m\right).
	\end{align}
	
	By Chebyshev's inequality and the moment bound Lemma \ref{prop:moment},
	\begin{align}
		&\Pro\left(\sup_{t \leq T_m} \sup_{x \in D} |Z_n(t,x)| > 3^{m-2} \Big| |v(\rho_n)|_{L^\infty} = 3^m\right)\nonumber\\
		&\leq C3^{-mp} \E \left(\sup_{t \leq T_m} \sup_{x \in D} |Z_n(t,x)|^p \Big| |v(\rho_n)|_{L^\infty} =3^m\right)\nonumber\\
		&\leq C 3^{-\frac{3mp}{2}+3m}\E\left( \int_{\rho_n}^{\rho_{n+1}} \int |\sigma(v(s,y))|^pdyds\Big| |v(\rho_n)|_{L^\infty} =3^m\right).
	\end{align}
	By Assumption \ref{assum}, $|\sigma(v(s,y))| \leq C3^{\frac{3m}{2}}$. Therefore, factoring out the supremum norm of $|\sigma(v(s,y))|^{p-2}$ in the above expression,
	\begin{align}
		&\Pro\left(\sup_{t \leq T_m} \sup_{x \in D} |Z_n(t,x)| > 3^{m-2} \Big| |v(\rho_n)|_{L^\infty} = 3^m\right)\nonumber\\
		&\leq C 3^{-\frac{3mp}{2} +mp + \frac{3m(p-2)}{2}} \E \left(\int_{\rho_n}^{\rho_{n+1}} \int |\sigma(u(s,y))|^2dyds \Big| |v(\rho_n)|_{L^\infty} =3^m\right).
	\end{align} 
	All of $m$ terms cancel out. The leading constant $C$ depends on $M$, $\e$, and $p$, which all have been fixed, but $C$ is uniform for $m>m_0$ and $n$. 
	By the law of total probability,
	\begin{align}
		&\Pro \left(|v(\rho_{n+1})|_{L^\infty} = 3 |v(\rho_n)|_{L^\infty} \text{ and } |v(\rho_n)|_{L^\infty} \geq 3^{m_0}\right) \nonumber\\
		&= \sum_{m=m_0}^\infty \Pro \left(|v(\rho_{n+1})|_{L^\infty} = 3^{m+1} | |v(\rho_n)|_{L^\infty} = 3^m\right) \Pro \left(|v(\rho_n)|_{L^\infty} = 3^m\right) \ \nonumber\\
		&\leq C  \E \int_{\rho_n}^{\rho_{n+1}} \int|\sigma(u(s,y))|^2dyds.
	\end{align}
\end{proof}

Next, we use the previously developed bounds to prove that the explosion time is larger than $\tau^{\inf}_\e \wedge \tau^1_M$ .

\begin{lemma} \label{lem:explosion-after-tau-inf-tau-1}
	For any $\e>0$ and $M>0$, 
	\begin{equation}
		\Pro\left(\tau^\infty_\infty> \tau^1_M \wedge \tau^{\inf}_\e\right)=1.
	\end{equation}
\end{lemma}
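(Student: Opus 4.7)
The plan is a Borel--Cantelli argument. On the event $\{\tau^\infty_\infty \leq T \wedge \tau^1_M \wedge \tau^{\inf}_\e\}$, continuity of $v$ in $t$ forces $|v(t)|_{L^\infty}$ to cross every level $3^m$ for arbitrarily large $m$ before $\tau^\infty_\infty$, so infinitely many tripling events
\[A_n := \bigl\{|v(\rho_{n+1})|_{L^\infty} = 3|v(\rho_n)|_{L^\infty},\ |v(\rho_n)|_{L^\infty} \geq 3^{m_0}\bigr\}\]
must occur (each level $3^m$ contributes at least one). Conversely, by Lemma \ref{lem:tripling-prob} applied pathwise and disjointness of the intervals $[\rho_n, \rho_{n+1}]$, which all sit inside $[0, \tau^1_M \wedge \tau^{\inf}_\e]$,
\[\sum_n \Pro(A_n) \leq C\,\E \int_0^{\tau^1_M \wedge \tau^{\inf}_\e} \int \sigma^2(v(s,y))\,dy\,ds.\]
If I can bound this quadratic variation uniformly on $[0, T \wedge \tau^1_M \wedge \tau^{\inf}_\e]$ for each fixed $T$, Borel--Cantelli rules out infinitely many $A_n$ by time $T$, contradicting explosion before $T$; sending $T \to \infty$ then finishes the proof.

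The main obstacle is therefore to bound $\E\int_0^{T \wedge \tau^1_M \wedge \tau^{\inf}_\e \wedge \tau^\infty_n} \int \sigma^2(v)$ uniformly in $n$, because although $\sigma^2(u) \leq C(1 + |u|^3)$ the $L^\infty$-norm of $v$ is not a priori controlled. I would apply Itô's formula to $I(t)^p$ for some $p > 2$ (say $p = 3$), localize with $\tau^\infty_n$ so the martingale part is a true $L^2$-martingale, and take expectations to obtain
\[\frac{p(p-1)}{2}\,\E\!\int_0^\tau\! I^{p-2}\!\!\int\!\sigma^2(v)\,dyds = \E I^p(\tau) - I^p(0) - p\,\E\!\int_0^\tau\! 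I^{p-1}\!\!\left(\int b(v)\,dy + \int v^{-\alpha}\,dy\right)ds,\]
with $\tau = T \wedge \tau^1_M \wedge \tau^{\inf}_\e \wedge \tau^\infty_n$. The $v^{-\alpha}$ contribution and the $\theta(1 + v)$ part of $b$ are handled by $I \leq M$ and $v \geq \e$. Under Assumption \ref{assum}(a), the crucial ingredient is Lemma \ref{lem:Jensen} combined with $(1/(2\pi)+u)h(u) \leq \sigma^2(u)/(4\pi)$, which gives $(1+I)\int h(v) \leq \tfrac{1}{2}\int\sigma^2$, hence $I^{p-1}\int h(v) \leq \tfrac{I^{p-2}}{2}\int\sigma^2$. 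This absorbs only a fraction $p/2$ of the coefficient $p(p-1)/2$ on the $\sigma^2$ term, leaving the strictly positive net weight $\tfrac{p(p-2)}{2}$ in front of $\E\int I^{p-2}\int\sigma^2$---this is precisely why I must take $p > 2$, since $p = 2$ produces an exact cancellation. Finally, $v \geq \e$ on $[0, \tau^{\inf}_\e]$ gives $I \geq 2\pi\e$, so $I^{p-2} \geq (2\pi\e)^{p-2}$, upgrading the bound on $\E\int I^{p-2}\int\sigma^2$ to one on $\E\int\sigma^2$ itself, with a constant depending on $M$, $\e$, $T$, $p$ but not $n$.

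Under Assumption \ref{assum}(b) Lemma \ref{lem:Jensen} is unavailable. Instead I would use the growth condition $\limsup h(u^2)/u^{2\gamma+1}=0$ from \eqref{eq:h-growth-case-b} (so $h(u) \leq C u^{\gamma+1/2}$ for large $u$) together with Cauchy--Schwarz and the lower bound $\sigma^2 \geq c|v|^{2\gamma}$ to obtain
\[\int h(v)\,dy \leq C + (M/c)^{1/2}\Bigl(\int \sigma^2(v)\,dy\Bigr)^{1/2},\]
and then close the $I^2$-Itô identity (so here $p = 2$ already suffices) using AM--GM with a small $\delta$ that absorbs only a strict fraction of $\E\int_0^\tau\int\sigma^2$ onto the right-hand side. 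In both cases I obtain $\E\int_0^{T\wedge\tau^1_M\wedge\tau^{\inf}_\e\wedge\tau^\infty_n}\int\sigma^2 \leq K(M,\e,T)$ uniformly in $n$; monotone convergence handles $n\to\infty$, Borel--Cantelli gives the conclusion for each $T$, and letting $T\to\infty$ yields $\Pro(\tau^\infty_\infty > \tau^1_M \wedge \tau^{\inf}_\e) = 1$.
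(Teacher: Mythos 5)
Your argument is correct and follows the paper's skeleton exactly at the top level: sum the tripling probabilities from Lemma \ref{lem:tripling-prob} over $n$, bound the resulting expected quadratic variation of the $L^1$ norm, and conclude by Borel--Cantelli that only finitely many triplings can occur before $\tau^1_M \wedge \tau^{\inf}_\e$. Where you genuinely diverge is in the one step the paper treats as trivial: the paper bounds the quadratic variation in a single line, writing $M^2 \geq \E|v(\tau^1_M\wedge\tau^{\inf}_\e)|_{L^1}^2 = |v(0)|_{L^1}^2 + \E\int_0^{\tau^1_M\wedge\tau^{\inf}_\e}\int\sigma^2(v)\,dy\,ds$, which is the $p=2$ It\^o identity for $I^2$ with the drift contribution $2\E\int_0^{\tau} I(s)\bigl(\int b(v)\,dy + \int v^{-\alpha}\,dy\bigr)ds$ silently dropped (harmless when $b\geq 0$ on $[\e,\infty)$, as in the motivating examples, but not immediate from the assumption $|b(u)|\leq\theta(1+|u|)+h(|u|)$ alone). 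You instead carry the drift explicitly, which is why you need the extra machinery: under Assumption \ref{assum}(a) you re-invoke Lemma \ref{lem:Jensen} together with \eqref{eq:sigma-upper-lower-case-a} so that the drift is absorbed into a fraction $p/2$ of the It\^o coefficient $p(p-1)/2$, forcing $p>2$ and then the lower bound $I\geq 2\pi\e$ to recover $\E\int\int\sigma^2$; under Assumption \ref{assum}(b) you use \eqref{eq:h-growth-case-b}, Cauchy--Schwarz, and the lower bound $\sigma^2\geq c|u|^{2\gamma}$ with an AM--GM absorption at $p=2$. Your version is longer and pays the price of a $T$-dependent constant (hence the extra limit $T\to\infty$, which the paper avoids), but it actually justifies the quadratic-variation bound under the stated assumptions without any sign hypothesis on $b$; the paper's one-liner is cleaner but rests on an identity that is, strictly speaking, only an inequality after one checks the sign of the drift. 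Both routes land on the same estimate and the remainder of the two proofs coincides.
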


\begin{proof}
	The estimate from Lemma \ref{lem:tripling-prob} is uniform with respect to $n>0$. 
	
	Add these up with respect to $n$, remembering that $\rho_n\leq \rho_{n+1} \leq \tau^1_M \wedge \tau^{\inf}_\e$, to see that
	\begin{align}
		&\sum_{n=1}^\infty\Pro \left(|v(\rho_{n+1})|_{L^\infty} = 3 |v(\rho_n)|_{L^\infty} \text{ and } |v(\rho_n)|_{L^\infty} \geq 3^{m_0}\right) \nonumber\\
		&\leq  C  \E \int_{0}^{\tau^1_M \wedge \tau^{\inf}_\e}\int |\sigma(u(s,y))|^2dyds.
		\end{align}
		The right-hand side is proportional to the the quadratic variation of the $L^1$ norm, which is bounded by $M^2$ because of the stopping time
		\begin{equation}
			M^2 \geq \E|u(\tau^1_M \wedge \tau^{\inf}_\e)|_{L^1}^2 = |u(0)|_{L^1}^2 +\E \int_0^{\tau^1_M \wedge \tau^{\inf}_\e} \int |\sigma(u(s,y))|^2 dyds.
		\end{equation}
		
		The Borel-Cantelli Lemma guarantees that the $L^\infty$ norm can only triple a finite number of times before time $\tau^1_M \wedge \tau^{\inf}_\e$, when $m$ is large. This implies  that $\tau^\infty_\infty>\tau^1_M \wedge \tau^{\inf}_\e$ with probability one.
\end{proof}

\begin{proof}[Proof of Theorem \ref{thm:main}]
	We proved in Lemma \ref{lem:explosion-after-tau-inf-tau-1} that for any fixed $\e>0$ and $M>0$, 
	\begin{equation}
		\Pro(\tau^\infty_\infty > \tau^1_M \wedge \tau^{\inf}_\e).
	\end{equation}
	We can complete the proof that $v(t,x)$ cannot explode by proving that $\tau^1_M$ and $\tau^{\inf}_\e$ converge to $\infty$ as $\e \to 0$ and $M \to \infty$.
	
	Proposition \ref{prop:positivity} proves that  for arbitrary $\eta>0$ and $T>0$, there exists $\e_0= \e_0(T,\eta)>0$ such that
	\begin{equation}
		\Pro\left( \tau^{\inf}_{\e_0} \leq T \wedge \tau^\infty_\infty\right)< \frac{\eta}{2}.
	\end{equation}
	Then Lemma \ref{lem:L1-finite-assum-a} or Lemma \ref{lem:L1-finite-assum-b}, depending on whether we assume Assumption \ref{assum}(a) or (b), guarantees that there exists $M_0= M_0(T,\eta,\e_0)$ such that 
	\begin{align}
		& \Pro \left(\tau^1_{M_0} \leq T \wedge \tau^\infty_\infty \wedge \tau^{\inf}_{\e_0}\right)
		=\Pro \left( \sup_{t \in [0,T\wedge \tau^\infty_\infty \wedge \tau^{\inf}_\e)} \int u(t,x)dx > M_0 \right)
		<\frac{\eta}{2}.
	\end{align}
	Then because $\tau^\infty_\infty> \tau^1_{M_0} \wedge \tau^{\inf}_{\e_0}$, it follows that
	\begin{equation}
		\Pro\left(\tau^\infty_\infty \leq T\right) \leq \Pro\left( \tau^{\inf}_{\e_0} \leq T \wedge \tau^\infty_\infty\right) + \Pro \left(\tau^1_{M_0} \leq T \wedge \tau^\infty_\infty \wedge \tau^{\inf}_{\e_0}\right) < \eta.
	\end{equation}
	Because $\eta$ and $T$ were arbitrary, this proves that $\Pro(\tau^\infty_\infty = \infty)=1$ and the positive solution $v(t,x)$ defined in \eqref{eq:v} cannot explode in finite time. The argument for $v_-(t,x)$ defined in \eqref{eq:v-} is identical. Finally, by the comparison principle Proposition \ref{prop:comparison}, $-v_-(t,x) \leq u(t,x) \leq v(t,x)$ for all $t>0$ and $x \in [-\pi,\pi]$. Therefore, the local mild solution to $u(t,x)$ cannot explode.
	
\end{proof}

\bibliographystyle{./abbrv}
\bibliography{superlinear}

\end{document}